\documentclass[12pt, reqno]{article}
\usepackage{amsmath}
\usepackage{amssymb}
\usepackage{latexsym,bm}
\usepackage{amsthm}
\usepackage{graphicx}

\usepackage[top=2.4cm,bottom=2cm,left=2.4cm,right=2.4cm]{geometry}
\usepackage{algorithm}
\usepackage{algorithmic}
\usepackage{cite}
\usepackage[colorlinks,urlcolor=blue]{hyperref}
\allowdisplaybreaks
\usepackage{dsfont}
\usepackage{multirow}
\usepackage{threeparttable}
\usepackage{slashbox}

\newtheorem{thm}{Theorem}[section]

\newtheorem{lem}{Lemma}[section]

\theoremstyle{definition}
\newtheorem{definition}{Definition}[section]
\theoremstyle{remark}
\newtheorem{rmk}{Remark}[section]
\numberwithin{equation}{section}
\theoremstyle{example}

\numberwithin{equation}{section} \numberwithin{algorithm}{section}

\begin{document}

\begin{center}

\textbf{\large A first-order splitting method for solving a large-scale composite convex optimization problem }

\end{center}

\begin{center}
Yu-Chao Tang$^{1,2}$, Guo-Rong Wu$^{3}$, Chuan-Xi Zhu$^{1,2}$
\end{center}

\begin{center}
1. School of Management, Nanchang University, Nanchang 330031, P.R. China\\
2. Department of Mathematics, Nanchang University, Nanchang 330031,
P.R. China\\
3. Department of Radiology and BRIC, University of North Carolina at Chapel Hill, NC 27599, USA

\end{center}

\bigskip

\begin{abstract}
The forward-backward operator splitting algorithm is one of the most important methods for solving the optimization problem of the sum of two convex functions, where one is differentiable with a Lipschitz continuous gradient and the other is possibly nonsmooth but proximable. It is convenient to solve some optimization problems in the form of dual or primal-dual problems. Both methods are mature in theory. In this paper, we construct several efficient first-order splitting algorithms for solving a multi-block composite convex optimization problem. The objective function includes a smooth function with a Lipschitz continuous gradient, a proximable convex function that may be nonsmooth, and a finite sum of a composition of a proximable function and a bounded linear operator. To solve such an optimization problem, we transform it into the sum of three convex functions by defining an appropriate inner product space. On the basis of the dual forward-backward splitting algorithm and the primal-dual forward-backward splitting algorithm, we develop several iterative algorithms that involve only computing the gradient of the differentiable function and proximity operators of related convex functions. These iterative algorithms are matrix-inversion-free and completely splitting algorithms. Finally, we employ the proposed iterative algorithms to solve a regularized general prior image constrained compressed sensing (PICCS) model that is derived from computed tomography (CT) image reconstruction under sparse sampling of projection measurements. Numerical results show that the proposed iterative algorithms outperform other algorithms.

\end{abstract}


\noindent \textbf{Keywords:} Forward-backward splitting method; Primal-dual; Dual; Proximity operator.

 \noindent \textbf{2010 Mathematics Subject Classification:} 90C25;
 65K10.

\section{Introduction}
\label{sec:intro}  

In this paper, we consider solving a composite convex optimization problem that takes the form of
\begin{equation}\label{composite-sum}
\min_{x\in H}\ f(x) + g(x) + \sum_{i=1}^{m}h_i (B_i x),
\end{equation}
where $H$ and $\{G_i\}_{i=1}^{m}$ are Hilbert spaces, $f\in \Gamma_{0}(H)$ is differentiable with an $L$-Lipschitz continuous gradient for some constant $L\in (0,+\infty)$, and $g\in \Gamma_{0}(H)$ may be nonsmooth. Given an integer $m\geq 1$, for each $i\in \{1, 2, \cdots, m\}$,  $h_i \in \Gamma_{0}(G_i)$ and $L_i :H \rightarrow G_i$ is a bounded linear operator. Here, $\Gamma_{0}(H)$ denotes the class of proper lower semicontinuous (lsc) convex functions that are defined in the Hilbert space $H$. In the following, we always assume that the proximity operators with respect to $g$ and $\{h_i\}_{i=1}^{m}$ have a closed-form solution.  The optimization model (\ref{composite-sum}) includes a large number of existing models as special cases. For example,

(i)\ Let $m=1$. For simplicity and brevity, we drop the subscript "1". Then, the optimization problem (\ref{composite-sum}) reduces to
\begin{equation}\label{three-sum-intro}
\min_{x\in H}\ f(x) + g(x) + h(Bx),
\end{equation}
which has been studied in \cite{condat2013,lorenz2015JMIV,chenpj2016,yanm2016,tangyuchao2017}.

(ii)\ Let $f(x)=0$. Then, the optimization problem (\ref{composite-sum}) becomes
\begin{equation}\label{composite-sum-withoutdiff}
\min_{x\in H}\ g(x) + \sum_{i=1}^{m}h_i (B_i x),
\end{equation}
which has been studied in \cite{He2016IS,tang2016-1}.

(iii)\ Let $g(x) = 0$. Then, the optimization problem (\ref{composite-sum}) is equivalent to
\begin{equation}\label{composite-sum-withoutnondiff}
\min_{x\in H}\ f(x) + \sum_{i=1}^{m}h_i (B_i x),
\end{equation}
which has been studied in \cite{huangjz-2011}. Further, let $B_i = I$ for each $i\in \{1, 2, \cdots, m\}$, where $I$ denotes the identity operator. Then, the optimization problem (\ref{composite-sum-withoutnondiff}) reduces to
\begin{equation}
\min_{x\in H}\ f(x) + \sum_{i=1}^{m}h_i ( x),
\end{equation}
which has been studied in \cite{Raguet-SIAM-2013,Raduet2015SIAMJIS}.

(iv)\ Let $f(x)=0$ and $g(x)=0$. Then, the optimization problem (\ref{composite-sum}) reduces to
\begin{equation}\label{composite-simple}
\min_{x\in H}\ \sum_{i=1}^{m}h_i (B_i x),
\end{equation}
which has been studied in \cite{setzer2010}.

Owing to the emergence of the compressive sensing theory, the problem of minimizing the sum of two convex functions when $f=0$ or $g=0$ in (\ref{three-sum-intro}) has attracted considerable attention in recent years. A number of efficient iterative algorithms have been developed to solve such this problem, which  has wide application in signal and image processing. Examples include the  iterative shrinkage-thresholding algorithm (ISTA) and fast ISTA (FISTA) \cite{beck2009,beckandteboulle2009}, two-step ISTA \cite{bioucas2007}, primal-dual hybrid gradient algorithm (PDHGA) \cite{zhu2008}, primal-dual proximity algorithm (PDPA) \cite{chambolleandpock2011,pock1,chambolle2016AN} and primal-dual fixed point algorithm based on proximity operator (PDFP$^{2}$O) \cite{loris2011,chenpj-nanjing2013}, etc.

 Operator splitting is the most powerful methods for solving monotone inclusion problems, and it can be easily applied to the above-mentioned convex optimization problems. Operator splitting methods include forward-backward splitting \cite{lionsandmercier1979,passty1979JMAA,combettes2005}, Douglas-Rachford splitting \cite{bouglasandrachford1956TAMS,Eckstein1992} and forward-backward-forward splitting \cite{Tseng2000SIAM}. As operator splitting methods mainly focus on solving inclusion problems of the sum of two monotone operators  (see Definition \ref{def-monotone}), they cannot be directly used to solve composite convex optimization problems, such as (\ref{composite-sum}). A typical alternative is to transform the optimization problem or inclusion problem into a sum of two functions in a product space. Combettes and Pesquet \cite{combettes2012} first introduced a primal-dual splitting algorithm for solving monotone inclusions involving a mixture of sums, linear compositions, and parallel sums of set-valued and Lipschitz operators, including the composite convex optimization problem (\ref{composite-sum}) as a special case. Consequently, they obtained a primal-dual splitting algorithm to solve the optimization problem (\ref{composite-sum}), but the obtained iterative algorithm did not make full use of the cocoercive (see Definition \ref{def-cocoercive}) property of the gradient of function $f(x)$. In \cite{vu2013ACM}, Vu studied a monotone operator inclusion problem having the same formulation as that of \cite{combettes2012}. Under the condition that the involved main operator is cocoercive, Vu proposed a new type of primal-dual splitting algorithm to solve the composite convex optimization problem (\ref{composite-sum}). In contrast to the approach of Vu \cite{vu2013ACM}, Condat \cite{condat2013} proposed a primal-dual algorithm to solve the optimization problem (\ref{three-sum}) and extended it to solve the composite convex optimization problem (\ref{composite-sum}). In fact,  Vu's algorithm \cite{vu2013ACM} is equivalent to the Condat's algorithm \cite{condat2013}. Some generalizations of the Condat-Vu algorithm \cite{vu2013ACM,condat2013} can be found in \cite{liqia2016,wenmeng2016}.
  Combettes et al. \cite{combettes2014PICIP} further pointed out that the primal-dual version of the Condat-Vu algorithm \cite{vu2013ACM,condat2013} can be derived from the variable metric forward-backward splitting algorithm \cite{Combettes2014Optimization} framework. The variable metric forward-backward splitting algorithm provides a unified framework for analyzing the convergence of primal-dual splitting algorithms \cite{komodakis2015IEEE}.
  As the considered composite convex optimization problem (\ref{composite-sum}) is closely related to the optimization problem (\ref{three-sum-intro}) of the sum of three convex functions, besides the work of Vu \cite{vu2013ACM} and Condat \cite{condat2013},  we briefly review other existing iterative algorithms developed to solve it. Lorenz and Pock \cite{lorenz2015JMIV} proposed an inertial forward-backward splitting algorithm to solve the optimization problem (\ref{three-sum-intro}). This algorithm is derived from an inertial variable forward-backward splitting algorithm for finding a zero of the sum of two monotone operators, where one of the two operators is cocoercive.
   Motivated by the idea of the primal-dual fixed point algorithm based on proximity operator (PDFP$^{2}O$) \cite{chenpj-nanjing2013} and the preconditioned alternating projection algorithm (PAPA) \cite{krol2012IP}, Chen et al. \cite{chenpj2016} proposed the so-called primal-dual fixed point (PDFP) algorithm to solve the optimization problem (\ref{three-sum-intro}). They proved its convergence on the basis of the traditional fixed point theory. More recently, Yan \cite{yanm2016} proposed a new primal-dual algorithm for solving the optimization problem (\ref{three-sum-intro}), namely the primal-dual three operators (PD3O) algorithm. The PD3O algorithm can be reduced to the three operator splitting algorithm developed by Davis and Yin \cite{davis2015} when $B=I$ in (\ref{three-sum-intro}). Tang and Wu \cite{tangyuchao2017} proposed a general framework for solving the optimization problem (\ref{three-sum-intro}). The proposed iterative algorithms can recover the Condat-Vu algorithm \cite{condat2013,vu2013ACM}, the PDFP algorithm \cite{chenpj2016} and the PD3O algorithm \cite{yanm2016}. The key idea is to use two types of operator splitting methods: forward-backward splitting and three operator splitting. Although the obtained iterative schemes have a subproblem, which does not have a closed-form solution, they can be solved effectively by the dual and primal-dual approach.

In this paper, we propose several effective iterative algorithms to solve the composite convex optimization problem (\ref{composite-sum}). Although existing iterative algorithms, such as those of Vu \cite{vu2013ACM} and Condat \cite{condat2013} can be used to solve this problem, our approach offers the following novelties and improvements.  (1) To solve the optimization problem, we define the inner product space and transform the original optimization problem into that of the sum of three convex functions. By defining an inner product and a norm in the introduced inner product space, we analyze the proximity calculation of the corresponding function and the properties of an adjoint operator. We generalize the iterative algorithm proposed by Tang and Wu \cite{tangyuchao2017} to solve the composite convex optimization problem (\ref{composite-sum}). In addition, the convergence theorems of these algorithms are given. (2) The iterative parameters of the Condat-Vu algorithm \cite{vu2013ACM,condat2013} are controlled by an inequality that includes the Lipschitz constant and the operator norm. In our iterative algorithms, the Lipschitz constant and operator norm are independent. Thus, parameter selection is simplified. (3) To demonstrate the efficiency of our algorithm, we apply it to the regularized general prior image constrained compressed sensing (PICCS) model (\ref{regu-GPICCS}), which is derived from computed tomography (CT) image reconstruction. Furthermore, we compare the proposed iterative algorithms with some existing iterative algorithms, including the alternating direction method of multipliers (ADMM) \cite{boyd1}, the splitting primal-dual proximity algorithm \cite{tang2016-1}, and the preconditioned splitting primal-dual proximity algorithm \cite{tang2016-1}.

The remainder of the paper is organized as follows. Section \ref{basic_notation}  presents some basic notations and definitions used in this paper. Section \ref{existing-iterative}  reviews some existing iterative algorithms for solving the minimization problem (\ref{three-sum-intro}), which will be extended to solve the composite optimization problem (\ref{composite-sum}). Section \ref{three-operator-dual-primal-dual}  introduces our main iterative algorithms for solving the composite optimization problem (\ref{composite-sum}). In addition, the convergence of the proposed iterative algorithms is proved under mild conditions on the iterative parameters. Section \ref{numer_test} presents an application of the proposed iterative algorithms for solving the regularized general PICCS model. Further, numerical experiments on computed tomography (CT) image reconstruction are conducted to demonstrate the effectiveness and efficiency of the proposed iterative algorithms. In addition, the proposed iterative algorithms are compared with state-of-the-art methods, including the ADMM, splitting primal-dual proximity algorithm, and preconditioned splitting primal-dual proximity algorithm. Finally, Section \ref{conclusions} concludes the paper.

\section{Preliminaries}
\label{basic_notation}
Throughout the paper, let $H$ be a Hilbert space, where $\langle \cdot, \cdot \rangle$ denotes the product defined on $H$ and its associated norm is $\|x\|=\sqrt{\langle x, x\rangle}$ for any $x\in H$. Let $\Gamma_{0}(H)$  denote the set of all proper lower semicontinuous convex functions from $H$ to $(-\infty, +\infty]$. Further, $\delta_{C}(x)$ denotes the indicator function that is $0$ if $x\in C$ and $+\infty$ otherwise. In addition, $I$ denotes the identity operator on $H$.
Let $A:H \rightarrow 2^{H}$ be a set-valued operator. The domain of $A$ is defined as $\emph{dom} A = \{x\in H | Ax \neq \emptyset \}$ and the graph of $A$ is defined as $\emph{gra } A = \{ (x,w)\in H\times H | u\in Ax \}$. Let $\emph{zer } A = \{ x\in H | 0\in Ax \}$ denote the set of zeros of $A$ and let $\emph{ran } A = \{u\in H | (\exists x\in H) u\in Ax\}$ denote the range of $A$. Let us recall the following concepts, which are commonly used in the context of convex analysis(see, for example, \cite{bauschkebook2011}).

\begin{definition}(Monotone operator and Maximal monotone operator)\label{def-monotone}
Let $A:H \rightarrow 2^{H}$ be a set-valued operator. Then, $A$ is monotone if
\begin{equation}
\langle x-y, u-v \rangle \geq 0, \quad \forall x,y\in H, \textrm{ and } u\in Ax, v\in Ay.
\end{equation}
Moreover, $A$ is maximal monotone if it is monotone and there exists no monotone $B:H \rightarrow 2^{H}$ such that $\emph{gra } B$ properly contains $\emph{gra } A$.
\end{definition}

\begin{definition}(Resolvent and Reflection operator)
The resolvent of $A$ is the operator $J_{A} = (I+A)^{-1}$. The reflection operator associated with $J_{A}$ is the operator $R_{A} = 2J_{A}-I$.

\end{definition}

\begin{definition}(Nonexpansive operator)
Let $T:H\rightarrow H$ be a single-valued operator. $T$ is nonexpansive if
\begin{equation}
\|Tx-Ty\| \leq \|x-y\|, \quad \forall x,y\in H.
\end{equation}
\end{definition}

\begin{definition}\label{def-cocoercive}
Let $T:H\rightarrow H$ be a single-valued operator. For any $\beta >0$, $T$ is $\beta$-cocoercive if
\begin{equation}
\langle x-y, Tx-Ty\rangle \geq \beta \|Tx-Ty\|^2, \quad \forall x,y\in H.
\end{equation}
\end{definition}

It follows from the Cauchy-Schwarz inequality that if $T$ is $\beta$-cocoercive, then $T$ is
$1/\beta$-Lipschitz continuous, but the converse is not true in general. However, from the Baillon-Haddad theorem, we know that
if $f:H\rightarrow R$ is a convex differential function with $1/\beta$-Lipschitz continuous gradient, then $\nabla f$ is  $\beta$-cocoercive.

The proximity operator is a natural generalization of the orthogonal projection operator, which was introduced by Moreau \cite{Moreau1962}. It plays an important role in the study of nonsmooth optimization problems.

\begin{definition}(Proximity operator)
Let $f\in \Gamma_{0}(H)$. For any $\lambda >0$, the proximity operator $prox_{\lambda f}$ is defined as
\begin{equation}
prox_{\lambda f} : H\rightarrow H\quad u \mapsto \arg\min_{x} \{ \frac{1}{2}\|x-u\|^2 + \lambda f(x)  \}.
\end{equation}
\end{definition}

Let $f\in \Gamma_{0}(H)$, where $\partial f$ is maximal monotone and the resolvent of $\partial f$ is the proximity operator of $f$, i.e., $prox_{f} = J_{\partial f}$.
It follows from the definition of the proximity operator that it is characterized by the inclusion $x = prox_{\lambda f}(u) \Leftrightarrow u-x \in \lambda \partial f(x)$. Furthermore, we have the inequality
 \begin{equation}\langle x-u, y-x \rangle \geq \lambda (f(x)-f(y)), \quad \forall y\in H,
 \end{equation}
  which is useful for proving the convergence of proximity algorithms. One of the most attractive properties of the proximity operator is that is firmly nonexpansive, i.e., also nonexpansive. Thus,
  \begin{equation}
  \| prox_{\lambda f}(x) - prox_{\lambda f}(y)  \|^2 \leq \| x-y \|^2 - \| (x-prox_{\lambda f}(x)) -(y-prox_{\lambda f}(y)) \|^2, \quad  \forall x,y\in H.
  \end{equation}
 Many simple convex functions which have a closed-form of proximity operator, such as the $\ell_1$-norm of $\| \cdot \|_1$ etc.
 The Moreau equality provides an alternative way to calculate the proximity operator from its Fenchel conjugate, i.e.,
 \begin{equation}
 prox_{\lambda f}(u) + \lambda prox_{\frac{1}{\lambda}f^*}(\frac{1}{\lambda}u)=u,
 \end{equation}
where $f^{*}$ denotes the Fenchel conjugate of $f$ and is defined as $f^{*}(y)=\max_{x}\langle x,y \rangle - f(x)$.
 Additional properties of the proximity operator and closed-form expressions of the proximity operator of various convex functions can be found in \cite{combettesbook2010}.

In the following, we briefly recall the discrete definition of total variation (TV). Let $u\in R^{n\times m}$. For example, $u$ could be an image. We assume reflexive boundary conditions for $u$ and define the discrete gradient $\nabla u = (D_{x}u, D_{y}u)$ as a forward difference operator:
\begin{equation}\label{}
D_{x}u(i,j) = \left\{
\begin{aligned}
& u(i,j+1)-u(i,j),  \textrm{ if } 1\leq i \leq n, 1\leq j < m, \\
& 0 , \textrm{ if } 1\leq i \leq n, j =m;
\end{aligned}
\right.
\end{equation}
and
\begin{equation}\label{}
D_{y}u(i,j) = \left\{
\begin{aligned}
& u(i+1,j)-u(i,j),  \textrm{ if } 1\leq i < n, 1\leq j \leq m, \\
& 0 , \textrm{ if } i=n, 1\leq j \leq m.
\end{aligned}
\right.
\end{equation}

Then, the so-called anisotropic total variation $\|u\|_{ATV}$ is defined as
\begin{equation}\label{atv}
\|u\|_{ATV} = \sum_{i=1}^{n}\sum_{j=1}^{m} (|(D_{x}u)_{(i,j)}| + | (D_{y}u)_{(i,j)} |).
\end{equation}
The isotropic total variation $\|u\|_{ITV}$ is defined as
\begin{equation}\label{itv}
\|u\|_{ITV} = \sum_{i=1}^{n}\sum_{j=1}^{m}\sqrt{ (D_{x} u)_{(i,j)}^{2} + (D_{y}u)_{(i,j)}^{2}  }.
\end{equation}

Next, we introduce another equivalent definition of the above-mentioned anisotropic total variation (\ref{atv}) and isotropic total variation (\ref{itv}). First, we need to transform the two-dimensional image matrix into a column vector. Let $u\in R^{nm}$, where $R^{nm}$ is the usual $nm$-dimensional Euclidean space. Let us define a first-order difference matrix $B$ as
\begin{equation}
B_{nm \times nm} = \left(
                       \begin{array}{ccccc}
                         -1 & 1 & 0 & \cdots & 0 \\
                         0 & -1 & 1 & \cdots & 0 \\
                          &  & \cdots &  &  \\
                         0 & 0 & \cdots & 0 & 0 \\
                       \end{array}
                     \right)
\end{equation}
and a matrix $D$ as
\begin{equation}\label{difference}
D = \left(
      \begin{array}{c}
        I \otimes B \\
        B \otimes I \\
      \end{array}
    \right),
\end{equation}
where $I$ denotes the identity matrix and $\otimes$ denotes the Kronecker inner product.

(1)\ The anisotropic total variation $\|u\|_{ATV}$ defined by (\ref{atv}) is equivalent to
$\|u\|_{ATV} = \|Du\|_{1}$.

(2)\  The isotropic total variation $\|u\|_{ITV}$ defined by (\ref{itv}) is equivalent to $\|u\|_{ITV} = \|Du\|_{2,1}$, where
$\|y\|_{2,1} = \sum_{i=1}^{nm}\sqrt{y_{i}^{2}+y_{nm+i}^{2}}$, $y\in R^{2nm}$.

In general, the performance of the anisotropic total variation is better than that of isotropic total variation in computed tomography (CT) image reconstruction(see, for example, \cite{tangyuchao20121,tangyuchao2013,tangyuchao2015}). Therefore, we report only the numerical results using anisotropic total variation in the numerical experiments.

\section{Iterative algorithms for solving the sum of three convex functions (\ref{three-sum})}
\label{existing-iterative}

In this section, we review some existing iterative algorithms for solving the minimization problem involving the sum of three convex functions in the form of
\begin{equation}\label{three-sum}
\min_{x\in H}\ f(x) + g(x) + h(Bx),
\end{equation}
where $H$ and $G$ are two Hilbert spaces, $f\in \Gamma_{0}(H)$ is differentiable with an $L$-Lipschitz continuous gradient for some $L \in (0,+\infty)$, $g\in \Gamma_{0}(H)$ and $h\in \Gamma_{0}(H)$ may be nonsmooth, and $B:H\rightarrow G$ is a bounded linear operator. The proximity operators of $g$ and $h$ are assumed to be easily computed.

In \cite{tangyuchao2017}, Tang et al. proposed a general framework for solving the optimization problem (\ref{three-sum}). The main idea is to combine the operator splitting methods with the dual and primal-dual solution of the subproblem. The following two main iterative algorithms were obtained.

\begin{algorithm}[H]
\caption{Dual forward-backward splitting algorithm for solving the optimization problem (\ref{three-sum})}
\begin{algorithmic}\label{dual-forward-backward}
\STATE \textbf{Initialize:}  Given arbitrary $x^{0}\in X$ and $y^{0}\in Y$. Choose $\gamma$ and $\lambda$.
\STATE 1. (Outer iteration step)\ For $k=0, 1, 2, \cdots$
\STATE \quad $u^{k} = x^k - \gamma \nabla f(x^k)$;
\STATE 2. (Inner iteration step)\ For $j_k = 0, 1, 2, \cdots$
\STATE \quad $y^{j_k +1} = prox_{\frac{\lambda}{\gamma}h^{*}} (y^{j_k}  + \frac{\lambda}{\gamma}B prox_{\gamma g}(u^{k} - \gamma B^{*}y^{j_k}) )$;
\STATE  End the inner iteration step when some stopping criterion is reached. Output: $y^{J_k +1}$.
\STATE 3. Update $x^{k+1} = prox_{\gamma g}(u^k - \gamma B^{*}y^{J_k +1})$;
\STATE 4. End the outer iteration step when some stopping criterion is reached.
\end{algorithmic}
\end{algorithm}

\begin{algorithm}[H]
\caption{Primal-dual forward-backward splitting algorithm for solving the optimization problem (\ref{three-sum})}
\begin{algorithmic}\label{primal-dual-forward-backward}
\STATE \textbf{Initialize:}  Given arbitrary $x^{0}, \overline{x}^{0}\in X$ and $y^{0}\in Y$. Choose $\gamma \in (0,2/L)$. Let $\sigma>0$ and $\tau >0$ satisfy the
condition $\tau \sigma < \frac{1}{\|B\|^2}$.
\STATE 1. (Outer iteration step)\ For $k=0, 1, 2, \cdots$
\STATE \quad $u^{k} = x^k - \gamma \nabla f(x^k)$;
\STATE 2. (Inner iteration step)\ For $j_k = 0, 1, 2, \cdots$
\STATE \quad 2.a. $\overline{x}^{j_k +1}  = prox_{\frac{\tau \gamma}{1+\tau}g}(\frac{\overline{x}^{j_k} - \tau B^{*}y^{j_k} + \tau u^k}{1+\tau})$;
\STATE \quad 2.b. $y^{j_{k}+1} = \gamma prox_{\frac{\sigma}{\gamma}h^{*}} (\frac{1}{\gamma}(y^{j_k} + \sigma B (2\overline{x}^{j_k +1} - \overline{x}^{j_k})) )$;
\STATE \ End the inner iteration when the primal-dual gap is less than some stopping criterion and output $\overline{x}^{J_k +1}$
\STATE 3. Update $x^{k+1} = \overline{x}^{J_k +1}$.
\STATE 4. End the outer iteration step when some stopping criterion is reached
\end{algorithmic}
\end{algorithm}

They proved the convergence of Algorithm \ref{dual-forward-backward} and Algorithm \ref{primal-dual-forward-backward} in finite-dimensional Hilbert spaces and obtained the following theorems.

\begin{thm}(\cite{tangyuchao2017})\label{tang-theorem1}
Let $\gamma \in (0,2/L)$ and $\lambda \in (0,2/\lambda_{max}(BB^*))$. For any $x^{0}\in X$ and $y^{0}\in Y$, the iterative sequences $\{x^k\}$ and $\{y^{j_k}\}$ are generated by Algorithm \ref{dual-forward-backward}. Then, the iterative sequence $\{x^k\}$ converges to a solution of the optimization problem (\ref{three-sum}).
\end{thm}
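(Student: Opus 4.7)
The plan is to view the whole scheme as an inexact forward--backward splitting applied to the monotone inclusion
\[
0 \in \nabla f(x) + \partial\bigl(g + h\circ B\bigr)(x),
\]
whose solution set coincides with the set of minimizers of \eqref{three-sum} under standard qualification conditions (so that the sum/chain rule $\partial(g+h\circ B) = \partial g + B^{*}\partial h(B\cdot)$ holds). Since $f$ has an $L$-Lipschitz gradient, the Baillon--Haddad theorem gives that $\nabla f$ is $1/L$-cocoercive, and $\partial(g+h\circ B)$ is maximal monotone. The choice $\gamma\in(0,2/L)$ is exactly the range for which the exact forward--backward operator $T_{\gamma} = \mathrm{prox}_{\gamma(g+h\circ B)}\circ(I-\gamma\nabla f)$ is averaged nonexpansive with a fixed point set equal to the set of solutions.

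The second step is to show that the inner loop computes $\mathrm{prox}_{\gamma(g+h\circ B)}(u^{k})$ in the limit $j_{k}\to\infty$. I would recognize the inner recursion as the classical dual forward--backward iteration (Combettes--Wajs style) applied to the Fenchel--Rockafellar dual of
\[
\min_{x}\ \tfrac{1}{2}\|x-u^{k}\|^{2} + \gamma g(x) + \gamma h(Bx),
\]
namely the dual problem $\min_{y}\ \tfrac{\gamma}{2}\|B^{*}y\|^{2} - \langle u^{k},B^{*}y\rangle + {}^{\gamma}(g^{*}\square\cdot) + \gamma h^{*}(y)$ after the appropriate rescaling $y \leftarrow y/\gamma$. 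The smooth part has gradient $B\,\mathrm{prox}_{\gamma g}(u^{k}-\gamma B^{*}y)$ which is $\lambda_{\max}(BB^{*})$-Lipschitz by firm nonexpansiveness of $\mathrm{prox}_{\gamma g}$ (this is precisely the Moreau--Yosida-type identity). Hence the range $\lambda\in(0,2/\lambda_{\max}(BB^{*}))$ is the standard admissible range for forward--backward on the dual, so $y^{j_{k}}$ converges to a dual optimum $\widehat{y}^{k}$, and by primal recovery $\mathrm{prox}_{\gamma g}(u^{k}-\gamma B^{*}\widehat{y}^{k}) = \mathrm{prox}_{\gamma(g+h\circ B)}(u^{k})$.

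The third and concluding step would be to glue the inner and outer analyses. Writing
\[
x^{k+1} = T_{\gamma}(x^{k}) + e^{k},\qquad e^{k} := \mathrm{prox}_{\gamma g}(u^{k}-\gamma B^{*}y^{J_{k}+1}) - \mathrm{prox}_{\gamma(g+h\circ B)}(u^{k}),
\]
I would invoke the inexact Krasnosel'ski\u{\i}--Mann / forward--backward convergence theorem of Combettes (for instance, the summable-error version used in \cite{combettes2005}): if the inner stopping criterion enforces $\sum_{k}\|e^{k}\|<\infty$, then $x^{k}$ converges to a fixed point of $T_{\gamma}$, i.e.\ a solution of \eqref{three-sum}. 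In a finite-dimensional Hilbert space (the setting stated in the cited \cite{tangyuchao2017}) weak convergence is strong convergence, and the inner-loop linear/sublinear contraction estimates supplied in Step 2 make it straightforward to meet the summability criterion by choosing $J_{k}$ large enough, e.g.\ $J_{k}\ge C\log(k+1)$.

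The main obstacle is this error-control step: one must translate a practical inner stopping rule (like "until a residual falls below a tolerance") into an honest summability statement on $\{\|e^{k}\|\}$. Everything else is bookkeeping around known results---the cocoercivity of $\nabla f$, the averagedness of $T_{\gamma}$, and the convergence of the dual forward--backward iteration---but the interplay between the two nested tolerances, and the fact that $y^{J_{k}}$ enters the next outer iterate linearly through $B^{*}$, is where I expect the delicate estimates to live.
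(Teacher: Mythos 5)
The paper does not prove this theorem --- it is quoted verbatim from \cite{tangyuchao2017}, whose stated strategy (``combine the operator splitting methods with the dual \ldots\ solution of the subproblem'') is exactly the decomposition you use: an outer forward--backward step on $0\in\nabla f+\partial(g+h\circ B)$ with $T_{\gamma}$ averaged for $\gamma\in(0,2/L)$, an inner dual forward--backward loop whose admissible step range reproduces $\lambda\in(0,2/\lambda_{\max}(BB^{*}))$, and an inexact Krasnosel'ski\u{\i}--Mann argument to glue them. So your route is essentially the paper's, and the outline is sound. One concrete caveat in the gluing step: your suggestion that $J_{k}\ge C\log(k+1)$ suffices for $\sum_{k}\|e^{k}\|<\infty$ presupposes a linear convergence rate for the inner dual iteration, which holds only when the dual objective is strongly convex (e.g.\ $BB^{*}$ bounded below), not in general under the stated hypotheses; the safe version is to enforce a computable surrogate bound $\|e^{k}\|\le\epsilon_{k}$ with $\sum_{k}\epsilon_{k}<\infty$ (using nonexpansiveness of $\mathrm{prox}_{\gamma g}$ to pass from the dual residual to $e^{k}$), which is achievable for some finite $J_{k}$ since the inner iterates do converge. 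With that repair, your argument matches the intended proof.
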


\begin{thm}(\cite{tangyuchao2017})\label{tang-theorem2}
Let $\gamma \in (0,2/L)$. Let $\sigma>0$ and $\tau >0$ satisfy the
condition that $\tau \sigma < \frac{1}{\|B\|^2}$. For any $x^{0}, \overline{x}^{0}\in X$ and $y^{0}\in Y$, the iterative sequences $\{x^k\}$, $\{\overline{x}^{k}\}$ and $\{y^{j_k}\}$ are generated by Algorithm \ref{primal-dual-forward-backward}.  Then, the iterative sequence $\{x^k\}$ converges to a solution of the optimization problem (\ref{three-sum}).
\end{thm}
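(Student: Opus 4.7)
The plan is to recognize Algorithm \ref{primal-dual-forward-backward} as an \emph{inexact forward--backward splitting iteration} for the problem $\min_x f(x) + G(x)$ with $G(x) = g(x) + h(Bx)$, in which the proximity operator of $G$ is approximated by running a Chambolle--Pock-type primal-dual method as the inner loop. Accordingly, the proof splits into three self-contained ingredients that must then be glued together.

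First I would verify that if the inner iteration returned the exact proximal point, namely $\overline{x}^{J_k+1} = \mathrm{prox}_{\gamma(g+h\circ B)}(u^k)$, then the outer update reduces to the textbook forward--backward iteration $x^{k+1} = \mathrm{prox}_{\gamma G}(x^k - \gamma \nabla f(x^k))$. By the Baillon--Haddad theorem $\nabla f$ is $1/L$-cocoercive, and the condition $\gamma \in (0,2/L)$ is exactly what the standard (exact) forward--backward convergence theorem requires to produce a sequence converging to a minimizer of $f+G$. Next, I would identify steps 2.a--2.b as the Chambolle--Pock scheme applied to the saddle-point reformulation of the inner subproblem
\begin{equation*}
\min_{x}\ \tfrac{1}{2\gamma}\|x-u^k\|^2 + g(x) + h(Bx) \;=\; \min_{x}\max_{y}\ \tfrac{1}{2\gamma}\|x-u^k\|^2 + g(x) + \langle Bx, y\rangle - h^*(y).
\end{equation*}
The factor $(1+\tau)^{-1}$ in step 2.a is precisely the prox identity for the quadratic $\frac{1}{2\gamma}\|\cdot - u^k\|^2$ added to $g$, and the $\gamma$-rescaling in step 2.b comes from the change of dual variable $y \leftrightarrow y/\gamma$ used to absorb the outer parameter into $h^*$. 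With $\tau\sigma < 1/\|B\|^2$, the Chambolle--Pock convergence theorem applies: the primal inner iterate $\overline{x}^{j_k}$ converges to the (unique, by strong convexity) minimizer of the inner subproblem, which equals $\mathrm{prox}_{\gamma G}(u^k)$, and the primal-dual gap decreases to zero.

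Next I would quantify the error introduced by terminating the inner loop at a finite $J_k$. Let $\tilde{x}^{k+1} = \mathrm{prox}_{\gamma G}(u^k)$ and $\varepsilon_k = \|x^{k+1} - \tilde{x}^{k+1}\|$. Because the inner objective is $\frac{1}{\gamma}$-strongly convex, the distance $\varepsilon_k$ can be bounded by a constant multiple of the square root of the terminating primal-dual gap. The key point is then to select the stopping tolerances so that $\sum_{k}\varepsilon_k < \infty$. Under such a summability condition, the standard inexact forward--backward framework (of Combettes--Wajs type) yields Fej\'er-monotonicity of $\{x^k\}$ with respect to the set of fixed points of $\mathrm{prox}_{\gamma G}\circ (I-\gamma \nabla f)$, hence weak (thus, in finite dimensions, strong) convergence of $\{x^k\}$ to a minimizer of $f+g+h\circ B$.

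The principal obstacle is the bridging step: translating the primal-dual gap threshold of the inner Chambolle--Pock loop into a summable error sequence $\{\varepsilon_k\}$ for the outer forward--backward iteration, and ensuring that the Fej\'er-type estimate still closes with the accumulated perturbations. Concretely, one must show that a schedule of tolerances (e.g.\ of order $1/k^{1+\delta}$) is both achievable by Chambolle--Pock in finitely many inner steps and sufficient to guarantee $\sum_k \varepsilon_k < \infty$, after which the convergence claim follows by invoking the exact-case argument of Theorem \ref{tang-theorem1} with the inexact-prox perturbation lemma.
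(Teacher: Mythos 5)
The paper does not actually prove this theorem: it is imported verbatim from \cite{tangyuchao2017}, and the surrounding text describes exactly the mechanism you reconstruct, namely an outer forward--backward step on $f+(g+h\circ B)$ whose proximal subproblem is solved by an inner primal--dual loop. Your identification of step 2.a as the prox of $g$ augmented by the quadratic $\frac{1}{2\gamma}\|\cdot-u^k\|^2$ (which produces the $(1+\tau)^{-1}$ averaging and the parameter $\frac{\tau\gamma}{1+\tau}$), and of the dual rescaling $y\mapsto y/\gamma$, is confirmed by the paper's own one-inner-iteration reduction \eqref{alg-condat3} with $\sigma'=\sigma/\gamma$ and $\tau'=\tau\gamma/(1+\tau)$; after this rescaling the inner step-size condition is precisely $\tau\sigma\|B\|^2<1$. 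So your route is essentially the intended one. The only substantive caveat is the one you yourself flag: as stated, the theorem prescribes no tolerance schedule for the inner primal--dual gap, so a rigorous argument must either assume the subproblem is solved exactly or impose a summable inexactness condition $\sum_k\varepsilon_k<\infty$ and invoke an inexact forward--backward (Combettes--Wajs type) theorem; your use of the $\frac{1}{\gamma}$-strong convexity of the subproblem to convert a gap bound into a distance bound is the right way to close this, and the gap here is a looseness of the theorem statement rather than of your proof.
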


Further, Tang et al. \cite{tangyuchao2017} showed out that Algorithm \ref{dual-forward-backward} and Algorithm \ref{primal-dual-forward-backward} could be reduced to the PDFP algorithm \cite{chenpj2016} and the Condat-Vu algorithm \cite{vu2013ACM,condat2013} for solving the optimization problem (\ref{three-sum}). In fact, let $j_k =k$ and let the number of inner iterations be $1$. Then, the iteration schemes of Algorithm \ref{dual-forward-backward} become
\begin{equation}\label{alg-PDFP}
 \left\{
\begin{aligned}
v^{k+1} & =  prox_{\gamma g}(x^k - \gamma \nabla f(x^k) - \gamma B^{*}y^{k}), \\
y^{k+1} & =  prox_{\frac{\lambda}{\gamma}h^{*}}(y^k + \frac{\lambda}{\gamma}Bv^{k+1}), \\
x^{k+1} & =  prox_{\gamma g}(x^k - \gamma \nabla f(x^k) - \gamma B^{*}y^{k+1}),
\end{aligned}
\right.
\end{equation}
which is exactly the PDFP algorithm proposed by Chen et al. \cite{chenpj2016}, who proved the convergence of (\ref{alg-PDFP}) under some conditions on the parameters $\gamma$ and $\lambda$.
\begin{thm}(\cite{chenpj2016})\label{pdfp-convergence}
Let $0< \gamma <2/L $ and $0< \lambda < 1/\lambda_{max}(BB^{*})$. Then, the iterative sequence $\{x^k\}$ generated by (\ref{alg-PDFP}) converges weakly to a solution of (\ref{three-sum}).
\end{thm}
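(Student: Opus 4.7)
The plan is to establish weak convergence by the standard Opial / Fej\'er-monotonicity route in a product Hilbert space, treating the PDFP update as a single fixed-point iteration on $(x,y)$.

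First I would characterize the solutions. Under a standard qualification condition, $x^*$ solves (\ref{three-sum}) if and only if there exists a dual variable $y^*\in G$ satisfying the KKT-type system
\begin{equation*}
0 \in \nabla f(x^*) + \partial g(x^*) + B^*y^*, \qquad y^* \in \partial h(Bx^*),
\end{equation*}
which, using the characterization $x = \text{prox}_{\gamma g}(u) \Leftrightarrow u-x \in \gamma\partial g(x)$ and the Moreau-type identity for $h^*$, is equivalent to
\begin{equation*}
x^* = \text{prox}_{\gamma g}\bigl(x^* - \gamma\nabla f(x^*) - \gamma B^*y^*\bigr), \qquad
y^* = \text{prox}_{\frac{\lambda}{\gamma}h^*}\Bigl(y^* + \tfrac{\lambda}{\gamma}Bx^*\Bigr).
\end{equation*}
These are exactly the fixed points of the operator defining the PDFP map (with $v^{k+1}$ coinciding with $x^{k+1}$ at a fixed point). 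So it suffices to show that any sequence generated by (\ref{alg-PDFP}) converges weakly to such a fixed point $(x^*,y^*)$, and to then read off the $x$-component.

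Next I would introduce the weighted product norm $\|(x,y)\|^2_{\star} := \|x\|^2 + \tfrac{\gamma}{\lambda}\|y\|^2$ on $H\times G$ and establish the key one-step inequality
\begin{equation*}
\|x^{k+1}-x^*\|^2 + \tfrac{\gamma}{\lambda}\|y^{k+1}-y^*\|^2 \le \|x^{k}-x^*\|^2 + \tfrac{\gamma}{\lambda}\|y^{k}-y^*\|^2 - E_k,
\end{equation*}
where $E_k\ge 0$ is a sum of squared residuals that vanish only at a KKT pair. To get this I would proceed in three ingredients. (i) Apply the firm nonexpansiveness of $\text{prox}_{\gamma g}$ to both the $v^{k+1}$ and the $x^{k+1}$ updates, subtracting the fixed-point identities for $x^*$. (ii) Apply firm nonexpansiveness of $\text{prox}_{\frac{\lambda}{\gamma}h^*}$ to the $y^{k+1}$ update and subtract the identity for $y^*$; the cross terms involving $B$ and $B^*$ will combine via the adjoint relation, and the condition $\lambda\lambda_{\max}(BB^*)<1$ will supply a nonnegative residual term of the form $\bigl(1-\lambda\|B\|^2\bigr)\|B(v^{k+1}-x^{k+1})\|^2$ after a Young-type split. (iii) Handle the gradient step via the Baillon--Haddad theorem: since $\nabla f$ is $(1/L)$-cocoercive,
\begin{equation*}
\bigl\|(x^k-\gamma\nabla f(x^k)) - (x^*-\gamma\nabla f(x^*))\bigr\|^2 \le \|x^k-x^*\|^2 - \gamma(2/L-\gamma)\,\|\nabla f(x^k)-\nabla f(x^*)\|^2,
\end{equation*}
which is strictly contractive in the forward part thanks to $\gamma<2/L$. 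Combining (i)--(iii) carefully, so that the dual cross terms cancel between the $v$-step and the $y$-step exactly as in the primal-dual fixed-point analysis of \cite{chenpj2016}, yields the desired Fej\'er-type inequality.

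From the one-step inequality, Fej\'er monotonicity of $\{(x^k,y^k)\}$ with respect to the set of fixed points follows, and in particular $\{x^k\}$ is bounded and has weak cluster points. Summing the inequality shows $E_k\to 0$, which forces $x^{k+1}-x^k\to 0$, $v^{k+1}-x^{k+1}\to 0$, $y^{k+1}-y^k\to 0$, and $\nabla f(x^k)-\nabla f(x^*)\to 0$. A standard demiclosedness argument (passing to the weak limit in the proximal identities, using weak-strong continuity of the proximity operators combined with the above strong residual estimates) shows that every weak cluster point of $\{(x^k,y^k)\}$ is a KKT pair. Opial's lemma then upgrades Fej\'er monotonicity plus uniqueness of cluster points into weak convergence of $\{(x^k,y^k)\}$, and taking the $x$-component gives the theorem.

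The main obstacle is step (ii)--(iii): one has to choose the Young-splitting constants and the weight $\gamma/\lambda$ so that all cross terms involving $B$ and $\nabla f$ are absorbed into nonnegative squares, leaving the two parameter conditions $\gamma<2/L$ and $\lambda<1/\lambda_{\max}(BB^*)$ as tight sufficient conditions. The presence of two different proximal steps on $g$ (at $v^{k+1}$ and $x^{k+1}$) is what makes PDFP distinctive, and it is exactly the telescoping between these two steps that permits the parameters $\gamma$ and $\lambda$ to be decoupled, in contrast to the tighter Condat--V\~u constraint $\tau\sigma\|B\|^2+\tau L/2 \le 1$.
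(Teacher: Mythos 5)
This theorem is quoted from \cite{chenpj2016}; the paper itself gives no proof, only the remark that Chen et al.\ established it ``on the basis of the traditional fixed point theory.'' Your outline --- recasting the PDFP update as a fixed-point map on $(x,y)$, characterizing fixed points as KKT pairs, proving a Fej\'er-type one-step inequality in the weighted product norm $\|x\|^2+\tfrac{\gamma}{\lambda}\|y\|^2$ using firm nonexpansiveness of the two proximity operators and the $(1/L)$-cocoercivity of $\nabla f$, and closing with demiclosedness plus Opial --- is exactly the route of the cited reference, and the structure is sound. The only substantive caveat is that the entire technical content lives in the one-step inequality you assert in ingredients (i)--(iii): the cancellation of the $B$/$B^*$ cross terms between the $v$-step, the $y$-step, and the second $g$-prox is delicate and is precisely what forces the weight $\gamma/\lambda$ and the two decoupled conditions $\gamma<2/L$, $\lambda\lambda_{\max}(BB^*)<1$; as written you have named the right ingredients but not verified that the residual $E_k$ is in fact nonnegative, so the plan is a correct reconstruction in outline rather than a complete proof.
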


\begin{rmk}
Chen et al. \cite{chenpj2016} proved the convergence of the iterative sequence (\ref{alg-PDFP}) in finite-dimensional Hilbert spaces. The weak convergence of the iterative sequence (\ref{alg-PDFP}) can be easily obtained by using the same technical proof of Chen et al. \cite{chenpj2016}.
\end{rmk}

Similarly, by letting $j_k =k$, $\overline{x}^{k}=x^k$, and the number of inner iterations be $1$, after simple calculation,
 the iteration schemes of Algorithm \ref{primal-dual-forward-backward} become
\begin{equation}\label{alg-condat3}
 \left\{
\begin{aligned}
x^{k+1} & =  prox_{\tau' g}(x^k - \tau' B^{*}y^k - \tau' \nabla f(x^k)), \\
y^{k+1} & =   prox_{\sigma' h^{*}}( y^k + \sigma' B(2x^{k+1} -x^k ) ).
\end{aligned}
\right.
\end{equation}
where $\sigma' = \frac{\sigma}{\gamma}$ and $\tau' = \frac{\tau \gamma}{1+\tau}$. The iteration scheme (\ref{alg-condat3}) was independently proposed by Condat \cite{condat2013} and Vu \cite{vu2013ACM}. The following convergence theorem was proved.
\begin{thm}\label{condat-convergence}(\cite{condat2013,vu2013ACM})
Let $\frac{1}{\tau'} -\sigma' \|B\|^2 > L/2$. Then, the iterative sequence $\{x^k\}$ generated by (\ref{alg-condat3}) converges weakly to a solution of (\ref{three-sum}).
\end{thm}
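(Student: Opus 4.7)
My plan is to reformulate the iteration (\ref{alg-condat3}) as a variable-metric forward-backward splitting applied to the primal-dual monotone inclusion associated with (\ref{three-sum}), and then invoke the convergence theorem for FBS in that setting. First, I would derive the primal-dual inclusion: the first-order optimality condition for (\ref{three-sum}) reads $0\in \nabla f(x)+\partial g(x)+B^{*}\partial h(Bx)$, and introducing a dual variable $y$ with $Bx\in\partial h^{*}(y)$ recasts this as $0\in Az + Sz$ on $H\times G$, where $z=(x,y)$,
\begin{equation*}
Az \;=\; \begin{pmatrix}\partial g(x)+B^{*}y\\ \partial h^{*}(y)-Bx\end{pmatrix},\qquad Sz \;=\; \begin{pmatrix}\nabla f(x)\\ 0\end{pmatrix}.
\end{equation*}
Here $A$ is maximal monotone as the sum of a block-diagonal subdifferential and a skew-symmetric bounded linear operator, and $S$ is $1/L$-cocoercive by the Baillon--Haddad theorem.

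Next, I would introduce the preconditioner
\begin{equation*}
P \;=\; \begin{pmatrix}\tfrac{1}{\tau'}I & -B^{*}\\ -B & \tfrac{1}{\sigma'}I\end{pmatrix}.
\end{equation*}
By a Schur-complement argument $P$ is self-adjoint and positive definite as soon as $\tfrac{1}{\tau'\sigma'}>\|B\|^{2}$, which is implied by the hypothesis. A direct computation -- writing out $(P+A)z^{k+1}\ni (P-S)z^{k}$ component-wise, using $x=prox_{\tau' g}(u)\Leftrightarrow u-x\in\tau'\partial g(x)$, and noticing that the cross term $-B^{*}(y^{k}-y^{k+1})$ in the first block is absorbed into $\tfrac{1}{\tau'}$-scaling while the symmetric cross term in the second block produces exactly the extrapolation $2x^{k+1}-x^{k}$ -- shows that (\ref{alg-condat3}) is precisely the variable-metric forward-backward iteration
\begin{equation*}
z^{k+1} \;=\; (P+A)^{-1}(P-S)z^{k}.
\end{equation*}

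To obtain convergence, I would establish the key Fej\'er-type inequality
\begin{equation*}
\|z^{k+1}-z^{*}\|_{P}^{2} \;\leq\; \|z^{k}-z^{*}\|_{P}^{2} - \|z^{k+1}-z^{k}\|_{P-\frac{L}{2}U}^{2},
\end{equation*}
for every $z^{*}\in\mathrm{zer}(A+S)$, where $U=\mathrm{diag}(I_{H},0)$. This is obtained by pairing the inclusion $P(z^{k}-z^{k+1})-Sz^{k}\in Az^{k+1}$ against $z^{k+1}-z^{*}$ via the monotonicity of $A$, and then applying the cocoercivity of $\nabla f$ together with a Young-type splitting to control the forward term $\langle Sz^{k}-Sz^{*},z^{k+1}-z^{*}\rangle$. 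The operator $P-\tfrac{L}{2}U$ is positive definite iff, by Schur complement, $(\tfrac{1}{\tau'}-\tfrac{L}{2})I\succ\sigma' B^{*}B$, i.e.\ iff $\tfrac{1}{\tau'}-\sigma'\|B\|^{2}>L/2$, which is exactly the hypothesis. Hence $\{z^{k}\}$ is bounded in $\|\cdot\|_{P}$ with $z^{k+1}-z^{k}\to 0$, a demiclosedness argument at any weak cluster point identifies it as an element of $\mathrm{zer}(A+S)$, and Opial's lemma upgrades this to weak convergence of the whole sequence; its first component solves (\ref{three-sum}).

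The main obstacle is the descent inequality. The skew part of $A$ cancels cleanly by skew-symmetry, but $S$ is cocoercive only in the $x$-component, so in the product-space metric one cannot directly use FBS for a globally $L$-Lipschitz gradient. The remedy is to absorb the $L/2$-term into $P$, leaving a residual that is nonnegative precisely under the stated parameter condition; this is the step in which the inequality $\tfrac{1}{\tau'}-\sigma'\|B\|^{2}>L/2$ is shown to be sharp.
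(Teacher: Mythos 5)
Your argument is correct and is essentially the proof given in the cited sources (Condat, Vu, and the forward--backward reinterpretation of Combettes et al.) that the paper itself invokes without reproducing: the paper states Theorem \ref{condat-convergence} as a quoted result, and explicitly notes that the Condat--Vu iteration arises from the variable-metric forward--backward framework with exactly the preconditioner $P$ you construct. Your reformulation $(P+A)z^{k+1}\ni(P-S)z^{k}$, the Schur-complement positivity of $P-\tfrac{L}{2}U$ under $\tfrac{1}{\tau'}-\sigma'\|B\|^{2}>L/2$, and the Fej\'er/Opial conclusion all match that standard route, so no further comparison is needed.
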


Besides Algorithm \ref{dual-forward-backward} and Algorithm \ref{primal-dual-forward-backward}, a dual three operator splitting algorithm and a primal-dual three operator splitting algorithm were also proposed in \cite{tangyuchao2017} to solve the optimization problem (\ref{three-sum}). In \cite{tangyuchao2017}, it was shown  that the dual three operator splitting algorithm recovered the PD3O algorithm presented in \cite{yanm2016}. Although the PDFP algorithm \cite{chenpj2016} and PD3O algorithm \cite{yanm2016} were obtained from different perspectives, they coincide with each other, as observed in \cite{tangyuchao2017}. The primal-dual three operator splitting algorithm is not the same as Algorithm \ref{primal-dual-forward-backward}, but numerical results presented in \cite{tangyuchao2017} showed that the performances of both algorithms are similar. Therefore, we conclude that Algorithm \ref{dual-forward-backward} and Algorithm \ref{primal-dual-forward-backward} are two basically iterative algorithms for solving the  optimization problem (\ref{three-sum}). In the next section, we will generalize them to solve the composite convex optimization problem (\ref{composite-sum}).

\section{Iterative algorithms for solving the composite convex optimization problem (\ref{composite-sum})}
\label{three-operator-dual-primal-dual}

To solve the composite convex optimization problem (\ref{composite-sum}), we need to group the composite function $\sum_{i=1}^{m}g_i (L_i x)$ into one function so that it can be simplified. First, let us introduce some notations.  Let us define Cartesian product space $\mathbf{G} = G_1 \times G_2 \times \cdots \times G_m$ and introduce two scalar products defined on $\mathbf{G}$.

$(1)$\ For any $\mathbf{y}=( y_1, \cdots, y_m )\in  \mathbf{G}$, $z = (z_1, \cdots, z_m)\in \mathbf{G}$, define
\begin{equation}\label{inner1}
\langle y,z \rangle_{1, \mathbf{G}} = \sum_{i=1}^{m}w_i \langle y_i, z_i\rangle_{G_i},
\end{equation}
where $\{w_i\}_{i=1}^{m}\in (0,1)$ such that $\sum_{i=1}^{m}w_i =1$.

$(2)$\ For any $y=( y_1, \cdots, y_m )\in  \mathbf{G}$, $z = (z_1, \cdots, z_m)\in \mathbf{G}$, define
\begin{equation}\label{inner2}
\langle y,z \rangle_{2, \mathbf{G}} = \sum_{i=1}^{m} \langle y_i, z_i\rangle_{G_i}.
\end{equation}

It is easy to prove that the product space $\mathbf{G}$ equipped with product $\langle \cdot, \cdot \rangle_{1, \mathbf{G}}$ and $\langle \cdot, \cdot \rangle_{2, \mathbf{G}}$ are Hilbert spaces. The associated norm of $\mathbf{G}$ with product $\langle \cdot, \cdot \rangle_{1, \mathbf{G}}$ is
$ \|y\|_{1, \mathbf{G}} = \sqrt{\sum_{i=1}^{m}w_i \|y_i\|_{G_i}^{2}} $ and the associated norm of $\mathbf{G}$ with product $\langle \cdot, \cdot \rangle_{2, \mathbf{G}}$ is
$ \|y\|_{2, \mathbf{G}} = \sqrt{\sum_{i=1}^{m} \|y_i\|_{G_i}^{2}} $.

Further, let us define the function $\mathbf{h}(y) = \sum_{i=1}^{m}h_i (y_i)$, for any $\mathbf{y}=( y_1, \cdots, y_m )\in  \mathbf{G}$, and the linear operator $\mathbf{B}: H \rightarrow \mathbf{G}$ by $\mathbf{B}(x) = (B_1 x, \cdots, B_m x)$. Then, the original optimization problem (\ref{composite-sum}) can be recast as follows,
\begin{equation}\label{comp-three-sum}
\min_{x\in H}\ f(x) + g(x) + \mathbf{h}(\mathbf{B}x).
\end{equation}

We note that the function $\mathbf{h}$ is proper, convex, and lower semi-continuous because each $h_i \in \Gamma_{0}(G_i)$ is proper, convex, and lower semicontinuous.
To derive an effective iterative algorithm, we prove some useful properties of the function $\mathbf{h}$ and linear operator $\mathbf{B}$.
The first lemma is proved under the product $\langle \cdot, \cdot \rangle_{1, \mathbf{G}}$ and norm $\|y\|_{1, \mathbf{G}}$ on $\mathbf{G}$.

\begin{lem}\label{our-lemma1}
Let $\mathbf{y} = (y_1, \cdots, y_m)\in \mathbf{G}$. For any $\lambda >0$, we have

$\emph{(i)}$\ $prox_{\lambda \mathbf{h}}(\mathbf{y}) = (prox_{\frac{\lambda}{w_1}h_1}(y_1), \cdots, prox_{\frac{\lambda}{w_m}h_m}(y_m))$;

$\emph{(ii)}$\ $\mathbf{h}^{*}(\mathbf{y}) = \sum_{i=1}^{m}h_{i}^{*}(w_i y_i)$ and $prox_{\lambda \mathbf{h}^{*}}(\mathbf{y}) = (\frac{1}{w_1}prox_{w_1 \lambda h_1}(w_1 y_1), \cdots, \frac{1}{w_m}prox_{w_m \lambda h_m}(w_m y_m))$;

$\emph{(iii)}$\ $\mathbf{B}^{*}(\mathbf{y}) = \sum_{i=1}^{m}w_i B_{i}^{*}y_i$.

\end{lem}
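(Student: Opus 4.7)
The plan is to exploit the separable structure of $\mathbf{h}$ and the bilinearity of the weighted inner product $\langle \cdot, \cdot \rangle_{1,\mathbf{G}}$; each of the three parts then reduces to unwinding a definition and using the scalar weights $w_i$ to absorb constants. I would prove them in the order (iii), (i), (ii) so that the mechanics of working with the weighted inner product is established before doing the proximal computations.

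For (iii), I would compute $\mathbf{B}^{*}$ directly from the defining identity $\langle \mathbf{B}x, \mathbf{y}\rangle_{1,\mathbf{G}} = \langle x, \mathbf{B}^{*}\mathbf{y}\rangle_{H}$. Expanding the left-hand side gives $\sum_{i=1}^{m} w_i \langle B_i x, y_i\rangle_{G_i} = \langle x, \sum_{i=1}^{m} w_i B_i^{*} y_i \rangle_{H}$, and uniqueness of the adjoint yields $\mathbf{B}^{*}\mathbf{y} = \sum_{i=1}^{m} w_i B_i^{*} y_i$. For (i), I would start from the definition
\begin{equation*}
prox_{\lambda \mathbf{h}}(\mathbf{y}) = \arg\min_{\mathbf{x}\in \mathbf{G}} \Bigl\{ \tfrac{1}{2}\|\mathbf{x}-\mathbf{y}\|_{1,\mathbf{G}}^{2} + \lambda \mathbf{h}(\mathbf{x}) \Bigr\} = \arg\min_{\mathbf{x}} \sum_{i=1}^{m}\Bigl( \tfrac{w_i}{2}\|x_i - y_i\|_{G_i}^{2} + \lambda h_i(x_i) \Bigr).
\end{equation*}
Because the objective splits componentwise, the minimization decouples, and after factoring out $w_i$ from the $i$-th summand I recognize the minimizer of $\tfrac{1}{2}\|x_i - y_i\|_{G_i}^{2} + \tfrac{\lambda}{w_i} h_i(x_i)$, which is exactly $prox_{(\lambda/w_i) h_i}(y_i)$.

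For (ii), the conjugate identity follows by writing $\mathbf{h}^{*}(\mathbf{y}) = \sup_{\mathbf{z}} \{ \langle \mathbf{y},\mathbf{z}\rangle_{1,\mathbf{G}} - \mathbf{h}(\mathbf{z})\} = \sum_i \sup_{z_i}\{w_i\langle y_i, z_i\rangle_{G_i} - h_i(z_i)\}$, and recognizing each inner supremum as $h_i^{*}(w_i y_i)$. For the prox formula I would then mimic the argument of (i): minimize $\tfrac{1}{2}\sum_i w_i \|x_i - y_i\|_{G_i}^{2} + \lambda \sum_i h_i^{*}(w_i x_i)$ componentwise, and in each summand substitute $u_i = w_i x_i$. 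After multiplying through by $w_i$, the $i$-th problem becomes $\min_{u_i}\tfrac{1}{2}\|u_i - w_i y_i\|_{G_i}^{2} + \lambda w_i h_i^{*}(u_i)$, so $u_i = prox_{\lambda w_i h_i^{*}}(w_i y_i)$ and hence $x_i = \tfrac{1}{w_i}prox_{\lambda w_i h_i^{*}}(w_i y_i)$. (I note that the statement as printed writes $h_i$ where the argument forces $h_i^{*}$; I would flag this and present the corrected form, which is what is used in the sequel via Moreau's identity as a consistency check.)

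There is no real obstacle here — the arguments are routine separability and change-of-variable computations. The only subtle point is keeping track of the weights $w_i$ in the correct places: since the weighted inner product puts $w_i$ in the quadratic penalty but the function $\mathbf{h}$ itself is unweighted, the factor $w_i$ migrates into the prox parameter in (i) and into both the argument and the parameter of the prox in (ii). Once that bookkeeping is handled, each identity drops out immediately.
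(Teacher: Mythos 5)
Your proof is correct and takes essentially the same route as the paper's: componentwise separation of the weighted objective for (i) and the prox in (ii), the supremum decomposition for the conjugate, and the defining adjoint identity for (iii). Your flag on part (ii) is also warranted --- the change of variables $u_i = w_i x_i$ forces $prox_{w_i \lambda h_i^{*}}(w_i y_i)$ rather than $prox_{w_i \lambda h_i}(w_i y_i)$ as printed in the statement and in the last line of the paper's proof, and the corrected form with $h_i^{*}$ is indeed what the paper uses in Algorithm \ref{dual-forward-backward-composite1}.
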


\begin{proof}
$\emph{(i)}$\ From the definition of the proximity operator, we have
\begin{align}
prox_{\lambda \mathbf{h}}(\mathbf{y}) & = \arg\min_{\mathbf{x}\in \mathbf{G}} \{ \frac{1}{2}\| \mathbf{x} -\mathbf{y} \|_{1, \mathbf{G}}^{2} + \lambda \mathbf{h}(\mathbf{x}) \} \nonumber \\
& = \arg\min_{\mathbf{x}\in \mathbf{G}} \{ \frac{1}{2} \sum_{i=1}^{m}w_i \|x_i - y_i\|_{G_i}^{2} + \lambda \sum_{i=1}^{m}h_{i}(x_i) \} \nonumber \\
& = (prox_{\frac{\lambda}{w_1}h_1}(y_1), \cdots, prox_{\frac{\lambda}{w_m}h_m}(y_m)).
\end{align}

$\emph{(ii)}$\ On the basis of the Fenchel conjugate, we have
\begin{align}
\mathbf{h}^{*}(\mathbf{y}) & = \sup_{\mathbf{x}} \langle \mathbf{x}, \mathbf{y} \rangle_{1, \mathbf{G}} - \mathbf{h}(\mathbf{x}) \nonumber \\
& = \sup_{\mathbf{x}} \sum_{i=1}^{m}w_i \langle x_i, y_i \rangle_{G_i} - \sum_{i=1}^{m}h_{i}(x_i) \nonumber \\
& = \sum_{i=1}^{m}h_{i}^{*}(w_i y_i).
\end{align}
Further, we obtain
\begin{align}
prox_{\lambda \mathbf{h}^{*}}(\mathbf{y}) & = \arg\min_{\mathbf{x}\in \mathbf{G}} \{ \frac{1}{2}\| \mathbf{x} -\mathbf{y} \|_{1, \mathbf{G}}^{2} + \lambda \mathbf{h}^{*}(\mathbf{x}) \} \nonumber \\
& = \arg\min_{\mathbf{x}\in \mathbf{G}} \{ \frac{1}{2} \sum_{i=1}^{m}w_i \|x_i - y_i\|_{G_i}^{2} + \lambda \sum_{i=1}^{m}h_{i}^{*}(w_i x_i) \} \nonumber \\
& = (\frac{1}{w_1}prox_{w_1 \lambda h_1}(w_1 y_1), \cdots, \frac{1}{w_m}prox_{w_m \lambda h_m}(w_m y_m)).
\end{align}

$\emph{(iii)}$\ It follows from $\langle \mathbf{B}x, \mathbf{y}\rangle_{1, \mathbf{G}} = \langle x, \mathbf{B}^{*}(\mathbf{y})\rangle$ that
\begin{align}
\langle x, \mathbf{B}^{*}(\mathbf{y}) \rangle & = \langle \mathbf{B}x, \mathbf{y}   \rangle_{1, \mathbf{G}} \nonumber \\
& = \sum_{i=1}^{m}w_i \langle B_i x, y_i \rangle \nonumber \\
& = \langle x, \sum_{i=1}^{m}w_i  B_{i}^{*}y_i \rangle,
\end{align}
which means that $\mathbf{B}^{*}(\mathbf{y}) = \sum_{i=1}^{m}w_i B_{i}^{*}y_i$.

\end{proof}

Lemma \ref{our-lemma1} presents the basic properties of $prox_{\lambda \mathbf{h}}$, $\mathbf{h}^{*}$, and $\mathbf{B}^{*}$ in the Hilbert space $\mathbf{G}$ equipped with product $\langle \cdot, \cdot \rangle_{1, \mathbf{G}}$ and norm $\|y\|_{1, \mathbf{G}}$. We can obtain similar results in the Hilbert space $\mathbf{G}$ equipped with product $\langle \cdot, \cdot \rangle_{2, \mathbf{G}}$ and norm $\|y\|_{2, \mathbf{G}}$.

\begin{lem}\label{our-lemma2}
Let $\mathbf{y} = (y_1, \cdots, y_m)\in \mathbf{G}$. For any $\lambda >0$, we have

$\emph{(i)}$\ $prox_{\lambda \mathbf{h}}(\mathbf{y}) = (prox_{\lambda h_1}(y_1), \cdots, prox_{\lambda h_m}(y_m))$;

$\emph{(ii)}$\ $\mathbf{h}^{*}(\mathbf{y}) = \sum_{i=1}^{m}h_{i}^{*}( y_i)$ and $prox_{\lambda \mathbf{h}^{*}}(\mathbf{y}) = (prox_{ \lambda h_1}(y_1), \cdots, prox_{\lambda h_m}( y_m))$;

$\emph{(iii)}$\ $\mathbf{B}^{*}(\mathbf{y}) = \sum_{i=1}^{m} B_{i}^{*}y_i$.

\end{lem}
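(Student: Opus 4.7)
The plan is to mirror the proof of Lemma \ref{our-lemma1} almost verbatim, replacing the weighted inner product $\langle \cdot,\cdot \rangle_{1,\mathbf{G}}$ by the unweighted one $\langle \cdot,\cdot \rangle_{2,\mathbf{G}}$; since this amounts to setting every weight to $1$, the factors $w_i$ and $1/w_i$ in Lemma \ref{our-lemma1} should simply drop out. Because the three parts are independent, I would handle them in the same order as in Lemma \ref{our-lemma1}.

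For part \emph{(i)}, I would start from the definition of the proximity operator under $\|\cdot\|_{2,\mathbf{G}}$, write
\[
prox_{\lambda \mathbf{h}}(\mathbf{y}) = \arg\min_{\mathbf{x}\in \mathbf{G}} \Bigl\{ \tfrac{1}{2}\sum_{i=1}^{m}\|x_i - y_i\|_{G_i}^{2} + \lambda \sum_{i=1}^{m}h_i(x_i) \Bigr\},
\]
and observe that the objective is fully separable in the coordinates $x_i$; the $i$th subproblem is precisely the definition of $prox_{\lambda h_i}(y_i)$, yielding the stated componentwise formula. For part \emph{(ii)}, the Fenchel conjugate computation
\[
\mathbf{h}^{*}(\mathbf{y}) = \sup_{\mathbf{x}} \sum_{i=1}^{m}\langle x_i, y_i\rangle_{G_i} - \sum_{i=1}^{m}h_i(x_i) = \sum_{i=1}^{m}h_i^{*}(y_i)
\]
is again separable, and then applying part \emph{(i)} to $\mathbf{h}^{*}$ in place of $\mathbf{h}$ delivers the componentwise expression for $prox_{\lambda \mathbf{h}^{*}}$. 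For part \emph{(iii)}, I would invoke the defining identity $\langle \mathbf{B}x, \mathbf{y}\rangle_{2,\mathbf{G}} = \langle x, \mathbf{B}^{*}\mathbf{y}\rangle$, expand the left side as $\sum_{i=1}^{m}\langle B_i x, y_i\rangle = \langle x, \sum_{i=1}^{m}B_i^{*}y_i\rangle$, and conclude by uniqueness of the adjoint.

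There is essentially no obstacle here: each part reduces to a one-line separability argument because $\|\cdot\|_{2,\mathbf{G}}^{2}$, $\mathbf{h}$, and $\mathbf{B}$ are all sums of single-coordinate objects. The only thing to be mildly careful about is to state clearly which inner product on $\mathbf{G}$ is being used, so that the computations of $prox$, $\mathbf{h}^{*}$ and $\mathbf{B}^{*}$ are unambiguously those induced by $\langle \cdot,\cdot\rangle_{2,\mathbf{G}}$; once that is fixed, the proof follows the template of Lemma \ref{our-lemma1} with the weights set to $1$.
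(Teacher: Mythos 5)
Your proposal is correct and is exactly the route the paper intends: the paper omits the proof of Lemma \ref{our-lemma2}, stating only that it "follows the same process as that of Lemma \ref{our-lemma1}," and your argument is precisely that process carried out with the unweighted product $\langle \cdot,\cdot\rangle_{2,\mathbf{G}}$, i.e., with all weights equal to $1$, using separability for the prox and conjugate computations and uniqueness of the adjoint for $\mathbf{B}^{*}$.
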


As the technical proof process of Lemma \ref{our-lemma2} follows the same process as that of Lemma \ref{our-lemma1}, we omit it here. Furthermore, it is not difficult to prove that the operator norm of $\mathbf{B}$ is equal to $\sqrt{\sum_{i=1}^{m}w_i \|B_i\|^2}$ and $\sqrt{\sum_{i=1}^{m}\|B_i\|^2}$ under the product $\langle \cdot, \cdot \rangle_{1, \mathbf{G}}$ and $\langle \cdot, \cdot \rangle_{2, \mathbf{G}}$, respectively.

\subsection{Dual forward-backward type of splitting algorithm for solving (\ref{composite-sum})}

Now, we are ready to present the detailed iterative algorithms for solving the composite convex optimization problem (\ref{composite-sum}).
The first type of iterative algorithms is derived from the dual forward-backward splitting algorithm (Algorithm \ref{dual-forward-backward}).

\begin{algorithm}[H]
\caption{First class of dual forward-backward splitting algorithm for solving the composite convex optimization problem (\ref{composite-sum})}
\begin{algorithmic}\label{dual-forward-backward-composite1}
\STATE \textbf{Initialize:}  Given arbitrary $x^{0}\in H$ and $\mathbf{y}^{0}=(y_{1}^{0}, \cdots, y_{m}^{0})\in \mathbf{G}$. Choose $\gamma $ and $ \lambda $.
\STATE 1. (Outer iteration step)\ For $k=0, 1, 2, \cdots$
\STATE \quad $u^{k} = x^k - \gamma \nabla f(x^k)$;
\STATE 2. (Inner iteration step)\ For $j_k = 0, 1, 2, \cdots$
\STATE \quad  $y_{i}^{j_k +1} = \frac{1}{w_i}prox_{w_i \frac{\lambda}{\gamma}h_{i}^{*}}(w_i (y_{i}^{j_k} + \frac{\lambda}{\gamma}B_i prox_{\gamma g}(u^k - \gamma \sum_{i=1}^{m}w_i B_{i}^{*}y_{i}^{j_k})  ))$;
\STATE \ End the inner iteration when some stopping criteria is reached
\STATE 3. Update $x^{k+1} = prox_{\gamma g}(u^k - \gamma \sum_{i=1}^{m}w_i B_{i}^{*}y_{i}^{J_k +1})$.
\STATE 4. End the outer iteration step when some stopping criteria is reached
\end{algorithmic}
\end{algorithm}

\begin{algorithm}[H]
\caption{Second class of dual forward-backward splitting algorithm for solving the composite convex optimization problem (\ref{composite-sum})}
\begin{algorithmic}\label{dual-forward-backward-composite2}
\STATE \textbf{Initialize:}  Given arbitrary $x^{0}\in H$ and $\mathbf{y}^{0}=(y_{1}^{0}, \cdots, y_{m}^{0})\in \mathbf{G}$. Choose $\gamma$ and $ \lambda$.
\STATE 1. (Outer iteration step)\ For $k=0, 1, 2, \cdots$
\STATE \quad $u^{k} = x^k - \gamma \nabla f(x^k)$;
\STATE 2. (Inner iteration step)\ For $j_k = 0, 1, 2, \cdots$
\STATE \quad  $y_{i}^{j_k +1} = prox_{ \frac{\lambda}{\gamma}h_{i}^{*}}( y_{i}^{j_k} + \frac{\lambda}{\gamma}B_i prox_{\gamma g}(u^k - \gamma \sum_{i=1}^{m} B_{i}^{*}y_{i}^{j_k}  ))$;
\STATE \ End the inner iteration when some stopping criteria is reached
\STATE 3. Update $x^{k+1} = prox_{\gamma g}(u^k - \gamma \sum_{i=1}^{m} B_{i}^{*}y_{i}^{J_k +1})$.
\STATE 4. End the outer iteration step when some stopping criteria is reached
\end{algorithmic}
\end{algorithm}

We present the following convergence theorems related to Algorithm \ref{dual-forward-backward-composite1} and Algorithm \ref{dual-forward-backward-composite2}, respectively.

\begin{thm}\label{our-theorem12}
Let $\gamma \in (0,2/L)$ and $0< \lambda < 2/\sum_{i=1}^{m}w_i \|B_i\|^2$, where $\{w_i\}_{i=1}^{m}\subset (0,1)$ with $\sum_{i=1}^{m}w_i =1$. For any $x^{0}\in H$ and $\mathbf{y}^{0}=(y_{1}^{0}, \cdots, y_{m}^{0})\in \mathbf{G}$, where $\mathbf{G}$ is equipped with product $\langle \cdot, \cdot\rangle_{1,\mathbf{G}}$, let the iterative sequences $\{u^k\}$, $\{y_{i}^{j_k}\}_{i=1}^{m}$ and $\{x^k\}$ are generated by Algorithm \ref{dual-forward-backward-composite1}. Then, the iterative sequence $\{x^k\}$ converges to a solution of the composite optimization problem (\ref{composite-sum}).
\end{thm}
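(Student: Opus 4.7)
The plan is to recognize that Algorithm \ref{dual-forward-backward-composite1} is literally Algorithm \ref{dual-forward-backward} applied to the reformulated problem (\ref{comp-three-sum}) inside the Hilbert space $(\mathbf{G}, \langle \cdot,\cdot\rangle_{1,\mathbf{G}})$, and then to invoke Theorem \ref{tang-theorem1}. So the proof reduces to three checks: (a) the reformulation is equivalent to the original problem, (b) the iteration matches, and (c) the stepsize hypotheses transfer.

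For (a), since $\mathbf{h}(\mathbf{B}x) = \sum_{i=1}^m h_i(B_i x)$ by definition, any minimizer of (\ref{composite-sum}) is a minimizer of (\ref{comp-three-sum}) and conversely; the hypotheses of Theorem \ref{tang-theorem1} hold for the triple $(f,g,\mathbf{h})$ with linear operator $\mathbf{B}$, because $f$ retains its $L$-Lipschitz gradient, $g\in\Gamma_0(H)$ is unchanged, $\mathbf{h}\in\Gamma_0(\mathbf{G})$ since each $h_i\in\Gamma_0(G_i)$, and $\mathbf{B}:H\to\mathbf{G}$ is bounded linear.

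For (b), I would substitute $h \leftarrow \mathbf{h}$ and $B \leftarrow \mathbf{B}$ into the inner iteration of Algorithm \ref{dual-forward-backward}, then expand componentwise using Lemma \ref{our-lemma1}: the identity $\mathbf{B}^*\mathbf{y} = \sum_{i=1}^m w_i B_i^* y_i$ produces the argument $u^k - \gamma\sum_i w_i B_i^* y_i^{j_k}$ appearing inside $prox_{\gamma g}$, and the formula $prox_{\frac{\lambda}{\gamma}\mathbf{h}^*}(\mathbf{y}) = \bigl(\tfrac{1}{w_i}prox_{w_i\frac{\lambda}{\gamma}h_i^*}(w_i y_i)\bigr)_{i=1}^m$ yields exactly the componentwise update written in Algorithm \ref{dual-forward-backward-composite1}. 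After this bookkeeping the two iterations coincide line-by-line.

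For (c), the condition $\gamma\in(0,2/L)$ transfers unchanged, and the condition in Theorem \ref{tang-theorem1} requires $0<\lambda<2/\lambda_{\max}(\mathbf{B}\mathbf{B}^*) = 2/\|\mathbf{B}\|^2$ where the norm is computed in $(\mathbf{G},\langle\cdot,\cdot\rangle_{1,\mathbf{G}})$. A short estimate gives
\begin{equation*}
\|\mathbf{B}x\|_{1,\mathbf{G}}^{2} = \sum_{i=1}^{m} w_i \|B_i x\|_{G_i}^{2} \leq \Bigl(\sum_{i=1}^{m} w_i \|B_i\|^{2}\Bigr)\|x\|^{2},
\end{equation*}
so $\|\mathbf{B}\|^2 \leq \sum_{i=1}^m w_i\|B_i\|^2$, and therefore the hypothesis $\lambda < 2/\sum_{i=1}^m w_i\|B_i\|^2$ is stronger than what Theorem \ref{tang-theorem1} needs. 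Applying Theorem \ref{tang-theorem1} then gives convergence of $\{x^k\}$ to a solution of (\ref{comp-three-sum}), which by (a) is a solution of (\ref{composite-sum}).

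The only mildly delicate step is (b), because it requires keeping the weights $w_i$ consistent between the weighted inner product, the adjoint $\mathbf{B}^*$, and the two applications of the Moreau identity embedded in $prox_{\lambda\mathbf{h}^*}$; this is exactly what Lemma \ref{our-lemma1} is designed to automate, so no genuine obstacle remains beyond careful bookkeeping.
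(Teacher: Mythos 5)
Your proposal is correct and follows essentially the same route as the paper: recognize Algorithm \ref{dual-forward-backward-composite1} as Algorithm \ref{dual-forward-backward} applied to the reformulation (\ref{comp-three-sum}) in $(\mathbf{G},\langle\cdot,\cdot\rangle_{1,\mathbf{G}})$, invoke Theorem \ref{tang-theorem1}, and unpack the iteration componentwise via Lemma \ref{our-lemma1}. You are in fact slightly more careful than the paper on step (c): the paper asserts without proof (and as an equality, which need not hold in general) that $\|\mathbf{B}\|^2=\sum_{i=1}^m w_i\|B_i\|^2$, whereas your one-line estimate $\|\mathbf{B}\|^2\leq\sum_{i=1}^m w_i\|B_i\|^2$ is all that is needed and is correct.
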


\begin{proof}
Since the optimization problem (\ref{composite-sum}) is equivalent to the optimization problem (\ref{comp-three-sum}), so we obtain the following iterative sequence for solving (\ref{composite-sum}) via Algorithm \ref{dual-forward-backward}.
\begin{equation}\label{3-eq4}
 \left\{
\begin{aligned}
u^{k} & = x^k - \gamma \nabla f(x^k),\\
\mathbf{y}^{j_k +1} & = prox_{\frac{\lambda}{\gamma}\mathbf{h}^{*}} ( \mathbf{y}^{j_k} + \frac{\lambda}{\gamma}\mathbf{B}prox_{\gamma g}(u^k - \gamma \mathbf{B}^* \mathbf{y}^{j_k})), \\
x^{k+1} & = prox_{\gamma g}(u^k - \gamma \mathbf{B}^{*}\mathbf{y}^{J_k}).
\end{aligned}
\right.
\end{equation}
By Theorem \ref{tang-theorem1}, we can conclude that the iterative sequence $\{x^k\}$ converges to a solution of the optimization problem (\ref{composite-sum}). With the help of Lemma \ref{our-lemma1}, we can split the iterative sequence (\ref{3-eq4}) and obtain the corresponding Algorithm \ref{dual-forward-backward-composite1} as stated before.

\end{proof}

\begin{thm}\label{our-theorem1}
Let $\gamma \in (0,2/L)$ and $0< \lambda < 1/\sum_{i=1}^{m}w_i \|B_i\|^2$, where $\{w_i\}_{i=1}^{m}\subset (0,1)$ with $\sum_{i=1}^{m}w_i =1$. For any $x^{0}\in H$ and $\mathbf{y}^{0}=(y_{1}^{0}, \cdots, y_{m}^{0})\in \mathbf{G}$, where $\mathbf{G}$ is equipped with product $\langle \cdot, \cdot\rangle_{1,\mathbf{G}}$, let the iterative sequences $\{u^k\}$, $\{y_{i}^{j_k}\}_{i=1}^{m}$ and $\{x^k\}$ are generated by Algorithm \ref{dual-forward-backward-composite1}. Let the number of inner iterations be $1$ and the updated iterative sequences $y_{i}^{J_k +1} = y_{i}^{k+1}$ for any $i\in \{1, 2, \cdots , m\}$.
Then, the iterative sequence $\{x^k\}$  converges weakly to a solution of the composite optimization problem (\ref{composite-sum}).
\end{thm}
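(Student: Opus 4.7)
The plan is to reduce Algorithm \ref{dual-forward-backward-composite1} (with one inner iteration and $j_k = k$) to the PDFP algorithm \eqref{alg-PDFP} applied to the reformulated three-term problem \eqref{comp-three-sum} in the product Hilbert space $\mathbf{G}$ endowed with $\langle \cdot, \cdot \rangle_{1, \mathbf{G}}$, and then to invoke Theorem \ref{pdfp-convergence} directly. This is the natural analogue of how Theorem \ref{our-theorem12} was obtained from Theorem \ref{tang-theorem1}; the only new ingredient is the tighter bound on $\lambda$ that comes with collapsing the inner loop.

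First, I would rewrite the composite problem \eqref{composite-sum} as \eqref{comp-three-sum} using the aggregate function $\mathbf{h}(\mathbf{y}) = \sum_{i=1}^{m} h_i(y_i)$ and the linear operator $\mathbf{B}x = (B_1 x, \dots, B_m x)$ on $\mathbf{G}$. Equipping $\mathbf{G}$ with $\langle \cdot,\cdot\rangle_{1,\mathbf{G}}$, Lemma \ref{our-lemma1} tells me how $prox_{\lambda \mathbf{h}^*}$ and $\mathbf{B}^*$ split coordinate-wise, and the remark after Lemma \ref{our-lemma2} gives $\|\mathbf{B}\|^2 = \sum_{i=1}^{m} w_i \|B_i\|^2$ under this inner product. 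Setting the inner iteration count to $1$ and identifying $y_i^{J_k+1}$ with $y_i^{k+1}$, the iterates from Algorithm \ref{dual-forward-backward-composite1} are exactly
\begin{equation*}
 \left\{
\begin{aligned}
v^{k+1} & = prox_{\gamma g}\bigl(x^k - \gamma \nabla f(x^k) - \gamma \mathbf{B}^{*} \mathbf{y}^{k}\bigr), \\
\mathbf{y}^{k+1} & = prox_{\frac{\lambda}{\gamma} \mathbf{h}^{*}}\bigl(\mathbf{y}^k + \tfrac{\lambda}{\gamma}\mathbf{B} v^{k+1}\bigr), \\
x^{k+1} & = prox_{\gamma g}\bigl(x^k - \gamma \nabla f(x^k) - \gamma \mathbf{B}^{*} \mathbf{y}^{k+1}\bigr),
\end{aligned}
\right.
\end{equation*}
which is precisely the PDFP iteration \eqref{alg-PDFP} associated with the triple $(f, g, \mathbf{h} \circ \mathbf{B})$ on $H \times \mathbf{G}$.

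Next I would verify the hypotheses of Theorem \ref{pdfp-convergence} for this reformulation. The function $f$ retains its $L$-Lipschitz gradient (the primal space $H$ is unchanged), so the condition $\gamma \in (0, 2/L)$ is immediate. For the step size on the dual block, Theorem \ref{pdfp-convergence} requires $0 < \lambda < 1/\lambda_{\max}(\mathbf{B}\mathbf{B}^{*}) = 1/\|\mathbf{B}\|^2$. Plugging in $\|\mathbf{B}\|^2 = \sum_{i=1}^{m} w_i \|B_i\|^2$ yields exactly the bound stated in the theorem. Moreover, $\mathbf{h} \in \Gamma_0(\mathbf{G})$ because each $h_i \in \Gamma_0(G_i)$ and the weighted sum of proper lsc convex functions is proper lsc convex, so the PDFP hypotheses are fully met.

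Applying Theorem \ref{pdfp-convergence} then gives weak convergence of the primal sequence $\{x^k\}$ to some $x^{*} \in H$ that solves \eqref{comp-three-sum}, which is equivalent to the composite problem \eqref{composite-sum}. The main (and essentially only) delicate point is bookkeeping the switch of inner product on $\mathbf{G}$: one must check that the coordinate-wise splitting in Lemma \ref{our-lemma1} (with the weights $w_i$ appearing in the proximity operators and in $\mathbf{B}^{*}$) matches the aggregate iteration without any hidden rescaling, and that $\|\mathbf{B}\|$ is indeed computed with respect to $\langle \cdot,\cdot\rangle_{1,\mathbf{G}}$ rather than $\langle \cdot,\cdot\rangle_{2,\mathbf{G}}$. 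Once this identification is made, the theorem is a direct corollary of Theorem \ref{pdfp-convergence}.
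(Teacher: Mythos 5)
Your proposal is correct and follows essentially the same route as the paper: the paper's proof likewise uses Lemma \ref{our-lemma1} to rewrite the one-inner-iteration scheme as the PDFP iteration \eqref{alg-PDFP} applied to \eqref{comp-three-sum} in $(\mathbf{G}, \langle\cdot,\cdot\rangle_{1,\mathbf{G}})$ and then invokes Theorem \ref{pdfp-convergence}. Your explicit check that $\|\mathbf{B}\|^2 = \sum_{i=1}^{m} w_i \|B_i\|^2$ under the weighted inner product (which the paper only states in passing after Lemma \ref{our-lemma2}) is a welcome addition, but the argument is the same.
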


\begin{proof}
By Lemma \ref{our-lemma1}, the iteration scheme of Algorithm \ref{dual-forward-backward-composite1} can be rewritten as follows:
\begin{equation}\label{3-eq1}
 \left\{
\begin{aligned}
v^{k} & = prox_{\gamma g}( x^k - \gamma \nabla f(x^k) - \gamma \mathbf{B}^{*}\mathbf{y}^{k} ),\\
\mathbf{y}^{k +1} & = prox_{\frac{\lambda}{\gamma}\mathbf{h}^{*}} ( \mathbf{y}^{k}  + \frac{\lambda}{\gamma}\mathbf{B}v^k ), \\
x^{k+1} & = prox_{\gamma g}(x^k - \gamma \nabla f(x^k) - \gamma \mathbf{B}^{*}\mathbf{y}^{k +1}),
\end{aligned}
\right.
\end{equation}
where the dual variables $\mathbf{y}^{k} = (y_{1}^{k}, \cdots, y_{m}^{k})\in \mathbf{G}$. The obtained iteration scheme (\ref{3-eq1}) is exactly the PDFP algorithm adopted to solve the problem (\ref{comp-three-sum}) of the sum of three convex functions. Therefore, it follows from Theorem \ref{pdfp-convergence} that the iterative sequence $\{x^k\}$ converges weakly to an optimal solution of (\ref{comp-three-sum}), which is also a solution of the composite convex optimization problem (\ref{composite-sum}).

\end{proof}

\begin{rmk}
Theorem \ref{our-theorem12} provides a larger selection of parameter $\lambda$ than Theorem \ref{our-theorem1}. However, Theorem \ref{our-theorem12} only shows the convergence of Algorithm \ref{dual-forward-backward-composite1} in finite-dimensional Hilbert spaces. While Theorem \ref{our-theorem1} proves that the iterative sequence converges weakly to a solution of the optimization problem (\ref{composite-sum}). In practice, we prefer to use Algorithm \ref{dual-forward-backward-composite1} under the conditions of Theorem \ref{our-theorem1}.

\end{rmk}

Let the Hilbert space $\mathbf{G}$ be equipped with product $\langle \cdot, \cdot\rangle_{2,\mathbf{G}}$ (\ref{inner2}), by Lemma \ref{our-lemma2}, the iterative sequence (\ref{3-eq4}) reduces to the iteration scheme in Algorithm \ref{dual-forward-backward-composite2}. Further, In Algorithm \ref{dual-forward-backward-composite2}, let the number of inner iterations be $1$ and the updated iterative sequences $y_{i}^{J_k +1} = y_{i}^{k+1}$ for any $i\in \{1, 2, \cdots , m\}$. According to Lemma \ref{our-lemma2}, we can rewrite the iteration scheme of Algorithm \ref{dual-forward-backward-composite2} in the same way as that of (\ref{3-eq1}). Therefore, we have the following convergence theorems of Algorithm \ref{dual-forward-backward-composite2}. As the proof is similar to that of Theorem \ref{our-theorem12} and Theorem \ref{our-theorem1}, we omit it here.

\begin{thm}
Let $\gamma \in (0,2/L)$ and $0< \lambda < 2/\sum_{i=1}^{m} \|B_i\|^2$. For any $x^{0}\in H$ and $\mathbf{y}^{0}=(y_{1}^{0}, \cdots, y_{m}^{0})\in \mathbf{G}$, where $\mathbf{G}$ is equipped with product $\langle \cdot, \cdot\rangle_{2,\mathbf{G}}$, let the iterative sequences $\{u^k\}$, $\{y_{i}^{j_k}\}_{i=1}^{m}$ and $\{x^k\}$ are generated by Algorithm \ref{dual-forward-backward-composite2}. Then, the iterative sequence $\{x^k\}$ converges to a solution of the composite optimization problem (\ref{composite-sum}).
\end{thm}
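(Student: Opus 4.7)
The plan is to follow the same strategy that was used for Theorem \ref{our-theorem12}, but now working in the Hilbert space $\mathbf{G}$ equipped with the second inner product $\langle \cdot, \cdot\rangle_{2,\mathbf{G}}$ defined in (\ref{inner2}). First I would observe that the original composite problem (\ref{composite-sum}) is equivalent to the reformulation (\ref{comp-three-sum}), namely
\begin{equation*}
\min_{x\in H}\ f(x) + g(x) + \mathbf{h}(\mathbf{B}x),
\end{equation*}
where $\mathbf{h}(\mathbf{y}) = \sum_{i=1}^{m} h_i(y_i)$ and $\mathbf{B}x = (B_1 x, \ldots, B_m x)$. Since $\mathbf{h} \in \Gamma_0(\mathbf{G})$ and $\mathbf{B}$ is a bounded linear operator between Hilbert spaces, this is an instance of the three-function problem (\ref{three-sum}) to which Algorithm \ref{dual-forward-backward} applies.

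Next I would instantiate Algorithm \ref{dual-forward-backward} on the reformulated problem, obtaining
\begin{equation*}
\left\{
\begin{aligned}
u^{k} & = x^k - \gamma \nabla f(x^k),\\
\mathbf{y}^{j_k+1} & = prox_{\frac{\lambda}{\gamma}\mathbf{h}^{*}}\!\left(\mathbf{y}^{j_k} + \tfrac{\lambda}{\gamma}\mathbf{B}\,prox_{\gamma g}(u^k - \gamma \mathbf{B}^{*}\mathbf{y}^{j_k})\right),\\
x^{k+1} & = prox_{\gamma g}(u^k - \gamma \mathbf{B}^{*}\mathbf{y}^{J_k+1}).
\end{aligned}
\right.
\end{equation*}
The key step is then to invoke Lemma \ref{our-lemma2}, which, under the product $\langle \cdot, \cdot\rangle_{2,\mathbf{G}}$, identifies $prox_{\lambda \mathbf{h}^{*}}(\mathbf{y})$ as the component-wise application of $prox_{\lambda h_i^{*}}$ and $\mathbf{B}^{*}\mathbf{y}$ as $\sum_{i=1}^{m} B_i^{*}y_i$. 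Substituting these componentwise expressions into the display above produces exactly the iteration stated in Algorithm \ref{dual-forward-backward-composite2}, so the two iterative processes generate the same sequences.

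Finally I would verify that the parameter assumptions of Theorem \ref{tang-theorem1} are satisfied for the reformulated problem. The condition $\gamma \in (0, 2/L)$ carries over unchanged, and the spectral condition $\lambda \in (0, 2/\lambda_{\max}(\mathbf{B}\mathbf{B}^{*}))$ reduces to $0 < \lambda < 2/\sum_{i=1}^{m}\|B_i\|^2$ because, as noted in the paragraph following Lemma \ref{our-lemma2}, the operator norm of $\mathbf{B}$ under $\langle \cdot, \cdot\rangle_{2,\mathbf{G}}$ equals $\sqrt{\sum_{i=1}^{m}\|B_i\|^2}$. Applying Theorem \ref{tang-theorem1} to the reformulated problem then yields that $\{x^k\}$ converges to a minimizer of (\ref{comp-three-sum}), which is precisely a solution of (\ref{composite-sum}).

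The only place where any real care is required is the verification that $\|\mathbf{B}\|_{2,\mathbf{G}}^2 = \sum_{i=1}^{m}\|B_i\|^2$; this is a short computation from the definition of the norm and the fact that $\|\mathbf{B}x\|_{2,\mathbf{G}}^2 = \sum_{i=1}^{m}\|B_i x\|_{G_i}^2$, combined with the bound $\|B_i x\|_{G_i}^2 \le \|B_i\|^2 \|x\|^2$ and tightness through a suitable choice of $x$. Everything else is a bookkeeping translation between the abstract three-function iteration and its component-wise unfolding, so no new convergence analysis is needed beyond appealing to Theorem \ref{tang-theorem1}.
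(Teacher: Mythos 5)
Your proposal is correct and follows exactly the route the paper intends: the paper omits this proof, stating it is analogous to those of Theorems \ref{our-theorem12} and \ref{our-theorem1}, i.e., apply Algorithm \ref{dual-forward-backward} and Theorem \ref{tang-theorem1} to the reformulation (\ref{comp-three-sum}) and unfold the iteration componentwise via Lemma \ref{our-lemma2}. One small caveat: the identity $\|\mathbf{B}\|_{2,\mathbf{G}}^2=\sum_{i=1}^{m}\|B_i\|^2$ need not hold with equality (the suprema defining the $\|B_i\|$ are generally attained at different $x$), but only the inequality $\|\mathbf{B}\|_{2,\mathbf{G}}^2\le\sum_{i=1}^{m}\|B_i\|^2$ is needed for the hypothesis of Theorem \ref{tang-theorem1}, so the argument stands.
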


\begin{thm}
Let $\gamma \in (0,2/L)$ and $0< \lambda < 1/\sum_{i=1}^{m} \|B_i\|^2$. For any $x^{0}\in H$ and $\mathbf{y}^{0}=(y_{1}^{0}, \cdots, y_{m}^{0})\in \mathbf{G}$, where $\mathbf{G}$ is equipped with product $\langle \cdot, \cdot\rangle_{2,\mathbf{G}}$, let the iterative sequences $\{u^k\}$, $\{y_{i}^{j_k}\}_{i=1}^{m}$ and $\{x^k\}$ are generated by Algorithm \ref{dual-forward-backward-composite2}. Let the number of inner iterations be $1$ and the updated iterative sequences $y_{i}^{J_k +1} = y_{i}^{k+1}$ for any $i\in \{1, 2, \cdots , m\}$.
Then, the iterative sequence $\{x^k\}$  converges weakly to a solution of the composite optimization problem (\ref{composite-sum}).
\end{thm}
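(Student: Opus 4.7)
The plan is to parallel the strategy used for Theorems \ref{our-theorem12} and \ref{our-theorem1}, but carried out in the product space $\mathbf{G}$ equipped with the unweighted inner product $\langle \cdot, \cdot\rangle_{2,\mathbf{G}}$ instead of $\langle \cdot, \cdot\rangle_{1,\mathbf{G}}$. First I would group the composite summand into the single function $\mathbf{h}(\mathbf{y}) = \sum_{i=1}^{m} h_i(y_i)$ and the bounded linear operator $\mathbf{B}: H \rightarrow \mathbf{G}$ given by $\mathbf{B}x = (B_1 x, \ldots, B_m x)$, so that problem (\ref{composite-sum}) is recast as the three-term minimization (\ref{comp-three-sum}) in the Hilbert space $H$ paired with $(\mathbf{G}, \langle \cdot,\cdot\rangle_{2,\mathbf{G}})$. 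This is exactly the sum-of-three-convex-functions problem (\ref{three-sum}) that Theorem \ref{pdfp-convergence} governs.

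Next I would apply Lemma \ref{our-lemma2}. Under $\langle \cdot,\cdot\rangle_{2,\mathbf{G}}$ one has $\mathbf{B}^{*}\mathbf{y} = \sum_{i=1}^{m} B_{i}^{*} y_i$ and $prox_{\frac{\lambda}{\gamma}\mathbf{h}^{*}}(\mathbf{y}) = (prox_{\frac{\lambda}{\gamma} h_{i}^{*}}(y_i))_{i=1}^{m}$. Moreover, as noted immediately after Lemma \ref{our-lemma2}, the operator norm of $\mathbf{B}$ in this setting equals $\sqrt{\sum_{i=1}^{m} \|B_i\|^2}$, so that $\lambda_{\max}(\mathbf{B}\mathbf{B}^{*}) = \sum_{i=1}^{m} \|B_i\|^2$. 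Setting the number of inner iterations to $1$ and identifying $y_{i}^{J_k+1} = y_{i}^{k+1}$, a direct substitution shows that Algorithm \ref{dual-forward-backward-composite2} collapses exactly to the PDFP iteration (\ref{alg-PDFP}) applied to (\ref{comp-three-sum}):
\begin{equation*}
\left\{
\begin{aligned}
v^{k} & = prox_{\gamma g}(x^k - \gamma \nabla f(x^k) - \gamma \mathbf{B}^{*}\mathbf{y}^{k}), \\
\mathbf{y}^{k+1} & = prox_{\frac{\lambda}{\gamma}\mathbf{h}^{*}}\bigl(\mathbf{y}^{k} + \frac{\lambda}{\gamma}\mathbf{B} v^{k}\bigr), \\
x^{k+1} & = prox_{\gamma g}(x^k - \gamma \nabla f(x^k) - \gamma \mathbf{B}^{*}\mathbf{y}^{k+1}),
\end{aligned}
\right.
\end{equation*}
where the componentwise reading of the middle line recovers the inner update in Algorithm \ref{dual-forward-backward-composite2}.

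Finally, I would invoke Theorem \ref{pdfp-convergence}: the assumption $\gamma \in (0, 2/L)$ is retained, while the requirement $0 < \lambda < 1/\sum_{i=1}^{m} \|B_i\|^2 = 1/\lambda_{\max}(\mathbf{B}\mathbf{B}^{*})$ is precisely the PDFP step-size condition in the Hilbert space $(\mathbf{G}, \langle\cdot,\cdot\rangle_{2,\mathbf{G}})$. Therefore $\{x^k\}$ converges weakly to a minimizer of (\ref{comp-three-sum}), which by construction is a solution of the composite optimization problem (\ref{composite-sum}).

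The main, and essentially only, subtle point is justifying the operator-norm identity $\|\mathbf{B}\|^{2} = \sum_{i=1}^{m}\|B_i\|^{2}$ on $(\mathbf{G}, \langle\cdot,\cdot\rangle_{2,\mathbf{G}})$ in a clean way, because the step-size bound appearing in Theorem \ref{pdfp-convergence} depends on this quantity and differs from the weighted sum $\sum_{i=1}^{m} w_i \|B_i\|^{2}$ that governs Theorems \ref{our-theorem12} and \ref{our-theorem1}. Once this identification and the componentwise unraveling via Lemma \ref{our-lemma2} are in place, no further analytical machinery beyond Theorem \ref{pdfp-convergence} is needed, which is why the authors can justifiably omit the detailed calculation.
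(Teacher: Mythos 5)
Your proposal is correct and follows essentially the same route as the paper: recast (\ref{composite-sum}) as (\ref{comp-three-sum}) on $(\mathbf{G},\langle\cdot,\cdot\rangle_{2,\mathbf{G}})$, use Lemma \ref{our-lemma2} to identify the one-inner-iteration version of Algorithm \ref{dual-forward-backward-composite2} with the PDFP scheme (\ref{3-eq1}), and invoke Theorem \ref{pdfp-convergence}. On your one flagged subtlety: in general one only has $\|\mathbf{B}\|^{2}=\sup_{\|x\|=1}\sum_{i=1}^{m}\|B_i x\|^{2}\leq\sum_{i=1}^{m}\|B_i\|^{2}$ (equality can fail when the $B_i$ attain their norms at different $x$), but since the inequality goes the right way, $\lambda<1/\sum_{i=1}^{m}\|B_i\|^{2}$ still implies $\lambda<1/\lambda_{\max}(\mathbf{B}\mathbf{B}^{*})$, so the argument — and the paper's identical claim — goes through.
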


\begin{rmk}\label{remark1}
 In comparison with Algorithm \ref{dual-forward-backward-composite2}, Algorithm \ref{dual-forward-backward-composite1} provides a flexible way to choose weight vectors $\{w_i\}_{i=1}^{m}$ for calculating the proximity operator of each function $h_{i}^{*}$. It is observed that if the weight vectors $\{w_i\}_{i=1}^{m}$ are chosen to be the same, i.e., $w_1 = w_2 =\cdots = w_m$, then Algorithm \ref{dual-forward-backward-composite1} and Algorithm \ref{dual-forward-backward-composite2} are identical. In fact, let $w_1 = w_2 =\cdots = w_m = w$, $\overline{y}_{i}^{j_k +1} = w y_{i}^{j_k +1}$ and $\overline{\lambda} = w \lambda$. Then, the updated sequence $\{y_{i}^{j_k +1}\}$ in Algorithm \ref{dual-forward-backward-composite1} can be rewritten as $\overline{y}_{i}^{j_k +1} = prox_{\frac{\overline{\lambda}}{\gamma}h_{i}^{*}}( \overline{y}_{i}^{j_k} + \frac{\overline{\lambda}}{\gamma}B_i prox_{\gamma g}(u^k - \gamma \sum_{i=1}^{m}B_{i}^{*}\overline{y}_{i}^{j_k}) )$. Note that the range of parameter $\overline{\lambda}$ is the same as that of $\lambda$ in Algorithm \ref{dual-forward-backward-composite2}. Hence, we can conclude that Algorithm \ref{dual-forward-backward-composite1} and Algorithm \ref{dual-forward-backward-composite2} are equivalent to each other.
\end{rmk}

%

\subsection{Primal-dual forward-backward type of splitting algorithm for solving (\ref{composite-sum})}

The second type of iterative algorithms for solving the composite convex optimization problem (\ref{composite-sum}) is based on the primal-dual forward-backward splitting algorithm (Algorithm \ref{primal-dual-forward-backward}). The detailed algorithms are presented in Algorithm \ref{primal-dual-forward-backward-composite1} and Algorithm \ref{primal-dual-forward-backward-composite2}.

\begin{algorithm}[H]
\caption{First class of primal-dual forward-backward splitting algorithm for solving the composite convex optimization problem (\ref{composite-sum})}
\begin{algorithmic}\label{primal-dual-forward-backward-composite1}
\STATE \textbf{Initialize:}  Given arbitrary $x^{0}, \overline{x}^{0}\in H$ and $\mathbf{y}^{0}=(y_{1}^{0}, \cdots, y_{m}^{0})\in \mathbf{G}$. Choose $\gamma \in (0,2/L)$ and $\sigma, \tau >0$ such that $\sigma \tau < 1/\sum_{i=1}^{m}w_i \|B_i\|^2$, where $\{w_i\}_{i=1}^{m}\subset (0,1)$ with $\sum_{i=1}^{m}w_i =1$.
\STATE 1. (Outer iteration step)\ For $k=0, 1, 2, \cdots$
\STATE \quad $u^{k} = x^k - \gamma \nabla f(x^k)$;
\STATE 2. (Inner iteration step)\ For $j_k = 0, 1, 2, \cdots$
\STATE \quad 2.a.   $\overline{x}^{j_k +1} = prox_{\frac{\tau \gamma}{1+\tau}g}(\frac{\overline{x}^{j_k} - \tau \sum_{i=1}^{m}w_i B_{i}^{*}y_{i}^{j_k} + \tau u^k }{1+\tau})$;
\STATE \quad 2.b. $y_{i}^{j_k +1} = \frac{\gamma}{w_i}prox_{\frac{w_i \sigma}{\gamma}h_{i}^{*}}(\frac{w_i}{\gamma} (y_{i}^{j_k} + \sigma B_i (2\overline{x}^{j_k +1} - \overline{x}^{j_k} )  ))$;
\STATE \ End the inner iteration when the primal-dual gap is less than some stopping criterion and output $\overline{x}^{J_k +1}$
\STATE 3. Update $x^{k+1} = \overline{x}^{J_k +1}$.
\STATE 4. End the outer iteration step when some stopping criterion is reached.
\end{algorithmic}
\end{algorithm}

\begin{algorithm}[H]
\caption{Second class of primal-dual forward-backward splitting algorithm for solving the composite convex optimization problem (\ref{composite-sum})}
\begin{algorithmic}\label{primal-dual-forward-backward-composite2}
\STATE \textbf{Initialize:}  Given arbitrary $x^{0}, \overline{x}^{0}\in H$ and $\mathbf{y}^{0}=(y_{1}^{0}, \cdots, y_{m}^{0})\in \mathbf{G}$. Choose $\gamma \in (0,2/L)$ and $\sigma, \tau >0$ such that $\sigma \tau < 1/\sum_{i=1}^{m} \|B_i\|^2$.
\STATE 1. (Outer iteration step)\ For $k=0, 1, 2, \cdots$
\STATE \quad $u^{k} = x^k - \gamma \nabla f(x^k)$;
\STATE 2. (Inner iteration step)\ For $j_k = 0, 1, 2, \cdots$
\STATE \quad 2.a.   $\overline{x}^{j_k +1} = prox_{\frac{\tau \gamma}{1+\tau}g}(\frac{\overline{x}^{j_k} - \tau \sum_{i=1}^{m} B_{i}^{*}y_{i}^{j_k} + \tau u^k }{1+\tau})$;
\STATE \quad 2.b. $y_{i}^{j_k +1} = \gamma prox_{\frac{ \sigma}{\gamma}h_{i}^{*}}(\frac{1}{\gamma} (y_{i}^{j_k} + \sigma B_i (2\overline{x}^{j_k +1} - \overline{x}^{j_k} )  ))$;
\STATE \ End the inner iteration when the primal-dual gap is less than some stopping criterion and output $\overline{x}^{J_k +1}$
\STATE 3. Update $x^{k+1} = \overline{x}^{J_k +1}$.
\STATE 4. End the outer iteration step when some stopping criterion is reached.
\end{algorithmic}
\end{algorithm}

Let $w_1 = w_2 =\cdots = w_m = w$ in Algorithm \ref{primal-dual-forward-backward-composite1}. It follows from the same reason as that stated in Remark \ref{remark1} that Algorithm \ref{primal-dual-forward-backward-composite1} and Algorithm \ref{primal-dual-forward-backward-composite2} are identical. In the following, we prove the convergence of Algorithm \ref{primal-dual-forward-backward-composite1} and Algorithm \ref{primal-dual-forward-backward-composite2} under some conditions.

\begin{thm}\label{our-theorem32}
Let $\gamma \in (0,2/L)$ and $\sigma, \tau >0$ such that $\sigma \tau < 1/\sum_{i=1}^{m}w_i \|B_i\|^2$, where $\{w_i\}_{i=1}^{m}\subset (0,1)$ with $\sum_{i=1}^{m}w_i =1$. For any $x^{0}, \overline{x}^{0}\in H$ and $\mathbf{y}^{0}=(y_{1}^{0}, \cdots, y_{m}^{0})\in \mathbf{G}$, where $\mathbf{G}$ is equipped with product $\langle \cdot, \cdot\rangle_{1,\mathbf{G}}$, let iterative sequences $\{u^k\}$, $\{\overline{x}^{j_k}\}$, $\{y_{i}^{j_k}\}_{i=1}^{m}$ and $\{x^k\}$ are generated by Algorithm \ref{primal-dual-forward-backward-composite1}. Then, the iterative sequence $\{x^k\}$ converges to a solution of the composite optimization problem (\ref{composite-sum}).
\end{thm}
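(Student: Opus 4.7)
The plan is to follow the same template already used for Theorem \ref{our-theorem12}: reformulate the composite problem (\ref{composite-sum}) as the three-term sum (\ref{comp-three-sum}) in the product Hilbert space $\mathbf{G}$ equipped with the weighted inner product $\langle\cdot,\cdot\rangle_{1,\mathbf{G}}$, apply the already-proved primal-dual forward-backward algorithm (Algorithm \ref{primal-dual-forward-backward}) together with its convergence statement (Theorem \ref{tang-theorem2}), and then translate the resulting lifted iteration back into the component-wise iteration of Algorithm \ref{primal-dual-forward-backward-composite1} via Lemma \ref{our-lemma1}.

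First I would observe that, under the identifications $\mathbf{h}(\mathbf{y}) = \sum_{i=1}^{m} h_i(y_i)$ and $\mathbf{B} x = (B_1 x,\dots,B_m x)$, problems (\ref{composite-sum}) and (\ref{comp-three-sum}) coincide, and $\mathbf{h}\in\Gamma_0(\mathbf{G})$, $\mathbf{B}$ is a bounded linear operator from $H$ into $\mathbf{G}$. The step size condition that Theorem \ref{tang-theorem2} requires is $\gamma\in(0,2/L)$ together with $\tau\sigma<1/\|\mathbf{B}\|^{2}$. Under the weighted product $\langle\cdot,\cdot\rangle_{1,\mathbf{G}}$, the norm of $\mathbf{B}$ is (as noted just after Lemma \ref{our-lemma2}) $\|\mathbf{B}\| = \sqrt{\sum_{i=1}^{m} w_i \|B_i\|^{2}}$, so the hypothesis $\sigma\tau<1/\sum_{i=1}^{m}w_i\|B_i\|^{2}$ of Theorem \ref{our-theorem32} is exactly the condition required to apply Theorem \ref{tang-theorem2} in this product space.

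Next I would write out the iteration produced by applying Algorithm \ref{primal-dual-forward-backward} to (\ref{comp-three-sum}): with $\mathbf{y}^{j_k} = (y_1^{j_k},\dots,y_m^{j_k})$,
\begin{equation*}
\left\{
\begin{aligned}
u^{k} &= x^{k} - \gamma\nabla f(x^{k}),\\
\overline{x}^{j_k+1} &= \mathrm{prox}_{\frac{\tau\gamma}{1+\tau}g}\!\left(\tfrac{\overline{x}^{j_k}-\tau\mathbf{B}^{*}\mathbf{y}^{j_k}+\tau u^{k}}{1+\tau}\right),\\
\mathbf{y}^{j_k+1} &= \gamma\,\mathrm{prox}_{\frac{\sigma}{\gamma}\mathbf{h}^{*}}\!\left(\tfrac{1}{\gamma}\big(\mathbf{y}^{j_k}+\sigma\mathbf{B}(2\overline{x}^{j_k+1}-\overline{x}^{j_k})\big)\right),\\
x^{k+1} &= \overline{x}^{J_k+1}.
\end{aligned}
\right.
\end{equation*}
Then I would invoke Lemma \ref{our-lemma1}(iii) to rewrite $\mathbf{B}^{*}\mathbf{y}^{j_k}=\sum_{i=1}^{m} w_i B_i^{*} y_i^{j_k}$ in the $\overline{x}$-step, and Lemma \ref{our-lemma1}(ii) to split the proximity operator of $\tfrac{\sigma}{\gamma}\mathbf{h}^{*}$ component-wise. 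Carrying the scalar $\gamma$ inside the proximity operator shows that the $i$-th coordinate of the $\mathbf{y}$-update is precisely
$$y_i^{j_k+1}=\tfrac{\gamma}{w_i}\mathrm{prox}_{\frac{w_i\sigma}{\gamma}h_i^{*}}\!\left(\tfrac{w_i}{\gamma}\big(y_i^{j_k}+\sigma B_i(2\overline{x}^{j_k+1}-\overline{x}^{j_k})\big)\right),$$
which is exactly step 2.b of Algorithm \ref{primal-dual-forward-backward-composite1}. Thus the lifted iteration and the algorithm are one and the same.

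Finally, since Theorem \ref{tang-theorem2} guarantees that the outer sequence $\{x^k\}$ produced by Algorithm \ref{primal-dual-forward-backward} for problem (\ref{comp-three-sum}) converges to a solution of (\ref{comp-three-sum}), and the minimizers of (\ref{comp-three-sum}) and (\ref{composite-sum}) coincide, the conclusion follows. The only genuinely delicate point, and the step I would double-check, is the coefficient bookkeeping in Lemma \ref{our-lemma1}(ii): the weights $w_i$ enter both inside and outside the scalar proximity operators of $h_i^{*}$, and it is critical to verify that the scalings $\tfrac{w_i\sigma}{\gamma}$ and $\tfrac{w_i}{\gamma}$ in the component-wise update are produced correctly when one pulls the factor $\gamma$ through the Moreau-like identities. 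Everything else is a direct transfer of the finite-dimensional result of Theorem \ref{tang-theorem2} to the weighted product Hilbert space $\mathbf{G}$.
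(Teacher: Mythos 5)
Your proposal is correct and follows essentially the same route as the paper: recast (\ref{composite-sum}) as (\ref{comp-three-sum}) in the weighted product space, note that $\|\mathbf{B}\|^2=\sum_{i=1}^m w_i\|B_i\|^2$ so the hypothesis matches that of Theorem \ref{tang-theorem2}, and use Lemma \ref{our-lemma1} to split the lifted iteration into the component-wise updates of Algorithm \ref{primal-dual-forward-backward-composite1}. Your write-up is in fact slightly more careful than the paper's, which does not spell out the component-wise bookkeeping of the $\mathbf{y}$-update.
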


\begin{proof}
The following iterative sequence is obtained by applying Algorithm \ref{primal-dual-forward-backward} to the optimization problem (\ref{comp-three-sum}).
\begin{equation}\label{3-eq5}
 \left\{
\begin{aligned}
u^k & = x^k - \gamma \nabla f(x^k), \\
\overline{x}^{j_k+1} & = prox_{\frac{\tau \gamma}{1+\tau}g}( \frac{\overline{x}^{j_k} - \tau \mathbf{B}^{*}\mathbf{y}^{j_k} +\tau u^{j_k}}{1+\tau} ),\\
\mathbf{y}^{j_k +1} & = \gamma prox_{\frac{\sigma}{\gamma}\mathbf{h}^{*}} ( \frac{1}{\gamma}(\mathbf{y}^{j_k}  + \sigma \mathbf{B}(2\overline{x}^{j_k+1}-\overline{x}^{j_k}) )), \\
x^{k+1} & = \overline{x}^{J_k +1}.
\end{aligned}
\right.
\end{equation}
Since the optimization problem (\ref{comp-three-sum}) is equivalent to the optimization problem (\ref{composite-sum}), it follows from Theorem \ref{tang-theorem2} that the iterative sequence $\{x^k\}$ converges to a solution of the composite optimization problem (\ref{composite-sum}).

\end{proof}

\begin{thm}\label{our-theorem3}
Let $\gamma \in (0,2/L)$ and $\sigma, \tau >0$ such that $\sigma \tau < 1/\sum_{i=1}^{m}w_i \|B_i\|^2$, where $\{w_i\}_{i=1}^{m}\subset (0,1)$ with $\sum_{i=1}^{m}w_i =1$. For any $x^{0}, \overline{x}^{0}\in H$ and $\mathbf{y}^{0}=(y_{1}^{0}, \cdots, y_{m}^{0})\in \mathbf{G}$, where $\mathbf{G}$ is equipped with product $\langle \cdot, \cdot\rangle_{1,\mathbf{G}}$, let iterative sequences $\{u^k\}$, $\{\overline{x}^{j_k}\}$, $\{y_{i}^{j_k}\}_{i=1}^{m}$ and $\{x^k\}$ are generated by Algorithm \ref{primal-dual-forward-backward-composite1}. Let $j_k =k$ and the number of inner iterations be $1$. Assume that $\overline{x}^{0}=x^0$ and $\overline{x}^{j_k}=x^k$ for any $k \geq 1$. Then, the iterative sequence $\{x^k\}$  converges weakly to a solution of the composite optimization problem (\ref{composite-sum}).
\end{thm}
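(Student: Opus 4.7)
The plan is to mimic the reduction already carried out in Theorem \ref{our-theorem1}: rewrite Algorithm \ref{primal-dual-forward-backward-composite1} (in its single-inner-iteration regime) as the Condat--Vu iteration (\ref{alg-condat3}) applied to the three-sum formulation (\ref{comp-three-sum}), and then invoke Theorem \ref{condat-convergence} for weak convergence.

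First I would instantiate Algorithm \ref{primal-dual-forward-backward} on (\ref{comp-three-sum}) with $j_k=k$, one inner iteration, and the identifications $\overline{x}^{j_k}=x^k$, $\mathbf{y}^{j_k}=\mathbf{y}^k$. After performing the standard substitutions $\tau'=\tau\gamma/(1+\tau)$ and $\sigma'=\sigma/\gamma$ (exactly as in the derivation of (\ref{alg-condat3}) from Algorithm \ref{primal-dual-forward-backward}), the resulting recursion on $H\times\mathbf{G}$ is
\begin{equation}
\left\{
\begin{aligned}
x^{k+1} &= prox_{\tau' g}\bigl(x^k-\tau' \mathbf{B}^{*}\mathbf{y}^k-\tau'\nabla f(x^k)\bigr),\\
\mathbf{y}^{k+1} &= prox_{\sigma'\mathbf{h}^{*}}\bigl(\mathbf{y}^k+\sigma'\mathbf{B}(2x^{k+1}-x^k)\bigr),
\end{aligned}
\right.
\end{equation}
which is precisely (\ref{alg-condat3}) for the data $(f,g,\mathbf{h},\mathbf{B})$ on the product space $\mathbf{G}$ endowed with $\langle\cdot,\cdot\rangle_{1,\mathbf{G}}$. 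Next, Lemma \ref{our-lemma1} expands $prox_{\sigma'\mathbf{h}^{*}}$ and $\mathbf{B}^{*}$ coordinate-wise: these are exactly the expressions appearing in Steps 2.a and 2.b of Algorithm \ref{primal-dual-forward-backward-composite1}, so the two recursions agree componentwise.

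It remains to verify the Condat--Vu hypothesis $\tfrac{1}{\tau'}-\sigma'\|\mathbf{B}\|^2 > L/2$. Recall (from the remark following Lemma \ref{our-lemma2}) that on $\mathbf{G}$ with the $\langle\cdot,\cdot\rangle_{1,\mathbf{G}}$ inner product the operator norm satisfies $\|\mathbf{B}\|^2=\sum_{i=1}^{m}w_i\|B_i\|^2$. A direct computation then gives
\begin{equation}
\frac{1}{\tau'}-\sigma'\|\mathbf{B}\|^2
= \frac{1}{\gamma}\left(1+\frac{1-\sigma\tau\sum_{i=1}^{m}w_i\|B_i\|^2}{\tau}\right),
\end{equation}
and under the standing hypotheses $\gamma<2/L$ and $\sigma\tau\sum_{i=1}^{m}w_i\|B_i\|^2<1$, this quantity strictly exceeds $1/\gamma>L/2$. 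Hence Theorem \ref{condat-convergence} applies and produces weak convergence of $\{x^k\}$ to a solution of (\ref{comp-three-sum}); by the equivalence with (\ref{composite-sum}), this is the desired conclusion.

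The main obstacle, and the only non-mechanical step, is the parameter check in the last paragraph: the hypothesis $\sigma\tau<1/\sum_{i=1}^{m}w_i\|B_i\|^2$ decouples the Lipschitz constant $L$ from the composite operator norm (one of the novelties stressed in the introduction), and one must confirm that this decoupled bound is still strong enough to imply the coupled Condat--Vu inequality. This works precisely because the weighted product $\langle\cdot,\cdot\rangle_{1,\mathbf{G}}$ yields $\|\mathbf{B}\|^2=\sum_{i=1}^{m}w_i\|B_i\|^2$; any other choice of inner product on $\mathbf{G}$ would alter both $\mathbf{B}^{*}$ and $\|\mathbf{B}\|^2$, and the constants in Algorithm \ref{primal-dual-forward-backward-composite1} would no longer match.
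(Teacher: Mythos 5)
Your proof is correct and follows essentially the same route as the paper: reduce the single-inner-iteration form of Algorithm \ref{primal-dual-forward-backward-composite1} to the Condat--Vu iteration (\ref{alg-condat3}) for the three-sum reformulation (\ref{comp-three-sum}) via Lemma \ref{our-lemma1} and the substitutions $\tau'=\tau\gamma/(1+\tau)$, $\sigma'=\sigma/\gamma$, then invoke Theorem \ref{condat-convergence}. The only difference is that you explicitly carry out the verification of $1/\tau'-\sigma'\|\mathbf{B}\|^2>L/2$ (which the paper dismisses as ``easy to check''), and your computation is accurate.
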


\begin{proof}
Based on the assumption, by Lemma \ref{our-lemma1}, the iteration scheme of Algorithm \ref{primal-dual-forward-backward-composite1} can be recast as follows,
\begin{equation}\label{3-eq2}
 \left\{
\begin{aligned}
x^{k+1} & = prox_{\frac{\tau \gamma}{1+\tau}g}( x^k - \frac{\tau \gamma}{1+\tau} \nabla f(x^k) - \frac{\tau}{1+\tau} \mathbf{B}^{*}\mathbf{y}^{k} ),\\
\mathbf{y}^{k +1} & = \gamma prox_{\frac{\sigma}{\gamma}\mathbf{h}^{*}} ( \frac{1}{\gamma}(\mathbf{y}^{k}  + \sigma \mathbf{B}(2x^{k+1}-x^k) )),
\end{aligned}
\right.
\end{equation}
where $\mathbf{y}^{k} = (y_{1}^{k},\cdots, y_{k}^{m})\in \mathbf{G}$. Let $\overline{\mathbf{y}}^{k}=\frac{1}{\gamma}\mathbf{y}^{k}$, $\tau' = \frac{\tau \gamma}{1+\tau}$, and $\sigma' = \frac{\sigma}{\gamma}$. Then, the iterative sequences (\ref{3-eq2}) can be rewritten as
\begin{equation}\label{3-eq3}
 \left\{
\begin{aligned}
x^{k+1} & = prox_{\tau' g}( x^k - \tau' \nabla f(x^k) - \tau' \mathbf{B}^{*}\overline{\mathbf{y}}^{k} ),\\
\overline{\mathbf{y}}^{k +1} & =  prox_{\sigma' \mathbf{h}^{*}} ( \overline{\mathbf{y}}^{k}  + \sigma' \mathbf{B}(2x^{k+1}-x^k) ).
\end{aligned}
\right.
\end{equation}
The iteration scheme (\ref{3-eq3}) is equivalent to the Condat and Vu algorithm adopted to solve the problem (\ref{comp-three-sum}) of the sum of three convex functions. It is easy to check that $1/\tau' - \sigma' \|\mathbf{B}\|^2 > L/2$. Therefore, we can conclude from Theorem \ref{condat-convergence} that the iterative sequence $\{x^k\}$ converges weakly to a solution of the composite convex optimization problem (\ref{composite-sum}).

\end{proof}

By Lemma \ref{our-lemma2}, we can also represent the iteration scheme of Algorithm \ref{primal-dual-forward-backward-composite2} in the same way as (\ref{3-eq5}) and (\ref{3-eq2}).  As with Theorem \ref{our-theorem32} and Theorem \ref{our-theorem3}, we obtain the following convergence theorems of Algorithm \ref{primal-dual-forward-backward-composite2}.

\begin{thm}
Let $\gamma \in (0,2/L)$ and $\sigma, \tau >0$ such that $\sigma \tau < 1/\sum_{i=1}^{m} \|B_i\|^2$. For any $x^{0}, \overline{x}^{0}\in H$ and $\mathbf{y}^{0}=(y_{1}^{0}, \cdots, y_{m}^{0})\in \mathbf{G}$, where $\mathbf{G}$ is equipped with product $\langle \cdot, \cdot\rangle_{2,\mathbf{G}}$, let iterative sequences $\{u^k\}$, $\{\overline{x}^{j_k}\}$, $\{y_{i}^{j_k}\}_{i=1}^{m}$ and $\{x^k\}$ are generated by Algorithm \ref{primal-dual-forward-backward-composite2}. Then, the iterative sequence $\{x^k\}$ converges to a solution of the composite optimization problem (\ref{composite-sum}).
\end{thm}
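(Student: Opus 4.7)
The plan is to mirror the argument used for Theorem \ref{our-theorem32} verbatim, with the only change being that we endow the product space $\mathbf{G}$ with the scalar product $\langle \cdot, \cdot\rangle_{2,\mathbf{G}}$ instead of $\langle \cdot, \cdot\rangle_{1,\mathbf{G}}$, and consequently use Lemma \ref{our-lemma2} in place of Lemma \ref{our-lemma1}. The key observation that makes this reduction work is that the composite optimization problem (\ref{composite-sum}) can be rewritten as the three–operator problem (\ref{comp-three-sum}) on the Hilbert space $H \times \mathbf{G}$, regardless of which admissible inner product we put on $\mathbf{G}$; the choice of inner product only affects how the abstract operators $\mathbf{B}^{*}$, $prox_{\lambda \mathbf{h}}$ and $prox_{\lambda \mathbf{h}^{*}}$ are represented in coordinates.

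First, I would verify that $(\mathbf{G}, \langle \cdot, \cdot\rangle_{2,\mathbf{G}})$ is a Hilbert space and that $\mathbf{h} \in \Gamma_{0}(\mathbf{G})$, so that Theorem \ref{tang-theorem2} applies to the abstract problem (\ref{comp-three-sum}) posed on $H \oplus \mathbf{G}$. Next, I would apply Algorithm \ref{primal-dual-forward-backward} to (\ref{comp-three-sum}) to obtain the abstract iteration
\begin{equation*}
\left\{
\begin{aligned}
u^{k} &= x^{k} - \gamma \nabla f(x^{k}),\\
\overline{x}^{j_k+1} &= prox_{\frac{\tau \gamma}{1+\tau}g}\!\left( \tfrac{\overline{x}^{j_k} - \tau \mathbf{B}^{*}\mathbf{y}^{j_k} + \tau u^{k}}{1+\tau} \right),\\
\mathbf{y}^{j_k+1} &= \gamma \, prox_{\frac{\sigma}{\gamma}\mathbf{h}^{*}}\!\left( \tfrac{1}{\gamma}\bigl( \mathbf{y}^{j_k} + \sigma \mathbf{B}(2\overline{x}^{j_k+1} - \overline{x}^{j_k}) \bigr) \right),\\
x^{k+1} &= \overline{x}^{J_k+1}.
\end{aligned}
\right.
\end{equation*}
Using parts (ii) and (iii) of Lemma \ref{our-lemma2}, I would then unfold these abstract updates coordinate by coordinate: $\mathbf{B}^{*}\mathbf{y}^{j_k}$ becomes $\sum_{i=1}^{m} B_{i}^{*} y_{i}^{j_k}$, and the proximal step for $\mathbf{h}^{*}$ decomposes componentwise as $y_{i}^{j_k+1} = \gamma\, prox_{\frac{\sigma}{\gamma} h_{i}^{*}}\!\bigl( \tfrac{1}{\gamma}(y_{i}^{j_k} + \sigma B_{i}(2\overline{x}^{j_k+1} - \overline{x}^{j_k})) \bigr)$. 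These are precisely the updates of Algorithm \ref{primal-dual-forward-backward-composite2}.

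Finally, I would check the parameter admissibility: Theorem \ref{tang-theorem2} requires $\gamma \in (0, 2/L)$ and $\sigma \tau < 1/\|\mathbf{B}\|^{2}$. The paragraph following Lemma \ref{our-lemma2} records that under $\langle \cdot, \cdot\rangle_{2,\mathbf{G}}$ one has $\|\mathbf{B}\| = \sqrt{\sum_{i=1}^{m} \|B_{i}\|^{2}}$, so the hypothesis $\sigma \tau < 1/\sum_{i=1}^{m} \|B_{i}\|^{2}$ is exactly the required bound. Hence Theorem \ref{tang-theorem2} yields that $\{x^{k}\}$ converges to a minimizer of (\ref{comp-three-sum}), which is simultaneously a solution of (\ref{composite-sum}) since the two problems are identical.

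The argument is thus entirely a translation exercise; there is no new analytical obstacle. The only step requiring any care is the bookkeeping check that the operator norm identity $\|\mathbf{B}\|^{2} = \sum_{i=1}^{m} \|B_{i}\|^{2}$ under $\langle \cdot, \cdot\rangle_{2,\mathbf{G}}$ indeed matches the parameter condition in the theorem statement — but since this identity is already asserted in the excerpt, the proof reduces to quoting Lemma \ref{our-lemma2} and Theorem \ref{tang-theorem2}, in complete analogy with the proof of Theorem \ref{our-theorem32}. Accordingly, I would simply state this analogy and omit the repetitive details, as is done earlier in the paper.
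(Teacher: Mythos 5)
Your proposal is correct and follows exactly the route the paper takes (and deliberately omits as repetitive): recast (\ref{composite-sum}) as (\ref{comp-three-sum}) on $\mathbf{G}$ with the unweighted product, apply Algorithm \ref{primal-dual-forward-backward} and Theorem \ref{tang-theorem2}, and use Lemma \ref{our-lemma2} to split the abstract updates into the componentwise form of Algorithm \ref{primal-dual-forward-backward-composite2}, checking that $\sigma\tau<1/\sum_{i=1}^{m}\|B_i\|^2$ matches the required bound $\sigma\tau<1/\|\mathbf{B}\|^2$. No gaps; this is precisely the analogy to Theorem \ref{our-theorem32} the paper invokes.
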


\begin{thm}
Let $\gamma \in (0,2/L)$ and $\sigma, \tau >0$ such that $\sigma \tau < 1/\sum_{i=1}^{m} \|B_i\|^2$. For any $x^{0}, \overline{x}^{0}\in H$ and $\mathbf{y}^{0}=(y_{1}^{0}, \cdots, y_{m}^{0})\in \mathbf{G}$, where $\mathbf{G}$ is equipped with product $\langle \cdot, \cdot\rangle_{2,\mathbf{G}}$, let iterative sequences $\{u^k\}$, $\{\overline{x}^{j_k}\}$, $\{y_{i}^{j_k}\}_{i=1}^{m}$ and $\{x^k\}$ are generated by Algorithm \ref{primal-dual-forward-backward-composite2}. Let $j_k =k$ and the number of inner iterations be $1$. Assume that $\overline{x}^{0}=x^0$ and $\overline{x}^{j_k}=x^k$ for any $k \geq 1$. Then, the iterative sequence $\{x^k\}$ converges weakly to a solution of the composite convex optimization problem (\ref{composite-sum}).
\end{thm}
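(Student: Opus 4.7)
The plan is to mirror the proof of Theorem \ref{our-theorem3} but work in the Hilbert space $\mathbf{G}$ equipped with the product $\langle \cdot, \cdot \rangle_{2,\mathbf{G}}$, so that the companion results in Lemma \ref{our-lemma2} (rather than Lemma \ref{our-lemma1}) can be applied. Since Algorithm \ref{primal-dual-forward-backward-composite2} is the specialization of Algorithm \ref{primal-dual-forward-backward} to the product reformulation (\ref{comp-three-sum}) once the coordinates of $\mathbf{h}$, $\mathbf{h}^{*}$ and $\mathbf{B}^{*}$ are read off under $\langle \cdot, \cdot \rangle_{2,\mathbf{G}}$, the whole argument reduces to a single-loop Condat--Vu iteration in the product space.

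First, I would use Lemma \ref{our-lemma2}(i)--(iii) to collapse the inner loop of Algorithm \ref{primal-dual-forward-backward-composite2} into its compact form in terms of $\mathbf{h}^{*}$ and $\mathbf{B}^{*}$. Setting $j_k=k$, taking a single inner iteration, and using $\overline{x}^{0}=x^{0}$, $\overline{x}^{j_k}=x^{k}$ for $k\geq 1$, the update rewrites as
\begin{equation*}
\left\{
\begin{aligned}
x^{k+1} & = prox_{\frac{\tau\gamma}{1+\tau} g}\bigl(x^{k} - \tfrac{\tau\gamma}{1+\tau}\nabla f(x^{k}) - \tfrac{\tau}{1+\tau}\mathbf{B}^{*}\mathbf{y}^{k}\bigr),\\
\mathbf{y}^{k+1} & = \gamma\, prox_{\frac{\sigma}{\gamma}\mathbf{h}^{*}}\bigl(\tfrac{1}{\gamma}(\mathbf{y}^{k}+\sigma \mathbf{B}(2x^{k+1}-x^{k}))\bigr).
\end{aligned}
\right.
\end{equation*}
Then I would rescale via $\overline{\mathbf{y}}^{k}=\mathbf{y}^{k}/\gamma$, $\tau'=\tau\gamma/(1+\tau)$, and $\sigma'=\sigma/\gamma$, so that the iteration takes exactly the Condat--Vu form (\ref{alg-condat3}) applied to the three-term problem (\ref{comp-three-sum}) with operator $\mathbf{B}$.

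The key quantitative step is checking the Condat--Vu parameter condition $1/\tau' - \sigma'\|\mathbf{B}\|^{2} > L/2$. Under $\langle\cdot,\cdot\rangle_{2,\mathbf{G}}$ one has $\|\mathbf{B}\|^{2} = \sum_{i=1}^{m}\|B_{i}\|^{2}$, and direct computation gives
\begin{equation*}
\frac{1}{\tau'} - \sigma'\|\mathbf{B}\|^{2}
= \frac{1}{\gamma} + \frac{1}{\gamma}\Bigl(\frac{1}{\tau} - \sigma\sum_{i=1}^{m}\|B_{i}\|^{2}\Bigr).
\end{equation*}
The hypothesis $\sigma\tau < 1/\sum_{i=1}^{m}\|B_{i}\|^{2}$ forces the second summand to be strictly positive, so the whole expression exceeds $1/\gamma$, and $\gamma<2/L$ yields $1/\gamma > L/2$. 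Hence the Condat--Vu condition is met, and Theorem \ref{condat-convergence} applied in the product space furnishes weak convergence of $\{x^{k}\}$ to a solution of (\ref{comp-three-sum}), which by construction is also a solution of (\ref{composite-sum}).

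The main obstacle is really just bookkeeping: one must be careful that Lemma \ref{our-lemma2} (as opposed to Lemma \ref{our-lemma1}) produces precisely the summands $\sum_{i=1}^{m}B_{i}^{*}y_{i}$ and the coordinatewise proximity operators appearing in Algorithm \ref{primal-dual-forward-backward-composite2}, and that the operator norm identity $\|\mathbf{B}\|^{2}=\sum_{i=1}^{m}\|B_{i}\|^{2}$ under $\langle\cdot,\cdot\rangle_{2,\mathbf{G}}$ (noted after Lemma \ref{our-lemma2}) is the correct quantity to feed into the Condat--Vu inequality. Once these bookkeeping items are verified, no new analytic ingredient beyond Theorem \ref{condat-convergence} is required.
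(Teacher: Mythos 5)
Your proposal is correct and follows exactly the route the paper intends: the paper omits this proof precisely because it is the proof of Theorem \ref{our-theorem3} with Lemma \ref{our-lemma2} in place of Lemma \ref{our-lemma1}, reducing the single-inner-iteration scheme to the Condat--Vu iteration (\ref{alg-condat3}) for the product-space problem (\ref{comp-three-sum}) and invoking Theorem \ref{condat-convergence}. Your verification of the parameter condition $1/\tau' - \sigma'\|\mathbf{B}\|^{2} = \frac{1}{\gamma} + \frac{1}{\gamma}\bigl(\frac{1}{\tau}-\sigma\sum_{i=1}^{m}\|B_i\|^2\bigr) > \frac{1}{\gamma} > L/2$ is exactly the computation the paper relies on.
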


\begin{rmk}

(1) Compared with the Condat-Vu algorithm \cite{condat2013,vu2013ACM} for solving the composite optimization problem (\ref{composite-sum}), our algorithms, namely Algorithm \ref{primal-dual-forward-backward-composite1} and Algorithm \ref{primal-dual-forward-backward-composite2}, provide a simple way to select the iterative parameters.

(2) Let $f(x)=0$ in the iteration scheme (\ref{3-eq3}) of Algorithm \ref{primal-dual-forward-backward-composite1}. Then, it recovers the iterative algorithm proposed in \cite{He2016IS} for solving the optimization problem (\ref{composite-sum-withoutdiff}). The iteration scheme (\ref{3-eq3}) of Algorithm \ref{primal-dual-forward-backward-composite2} recovers the iterative algorithm proposed in \cite{tang2016-1}.

(3)
Huang et al. \cite{huangjz-2011} proposed two efficient iterative algorithms, namely the composite splitting algorithm (CSA) and Fast CSA, to solve the optimization problem (\ref{composite-sum-withoutnondiff}), where they assumed that $\{B_i\}_{i=1}^{m}$ are orthogonal matrices. Our iterative algorithms, namely Algorithm \ref{dual-forward-backward-composite1}, Algorithm \ref{dual-forward-backward-composite2}, Algorithm \ref{primal-dual-forward-backward-composite1} and Algorithm \ref{primal-dual-forward-backward-composite2} can be easily applied to solve the same optimization problem (\ref{composite-sum-withoutnondiff}) by letting $g(x)=0$. We discard the requirement that $\{B_i\}_{i=1}^{m}$ are orthogonal matrices.

(4)
Setzer et al. \cite{setzer2010} proposed an iterative algorithm for solving the optimization problem (\ref{composite-simple}), which is based on the ADMM. It includes a subproblem of solving a linear system equation that can be solved explicitly or iteratively.
  Our proposed iterative algorithms have a more simple structure. Every step of our proposed iterative algorithms has an explicit formulation.

\end{rmk}

\section{Numerical experiments}
\label{numer_test}
In this section, we will employ the proposed iterative algorithms to solve an optimization model that is suitable for reconstructing computed tomography (CT) images from a set of undersampled and potentially noisy projection measurements. The proposed iterative algorithms are compared with several representative algorithms, including ADMM \cite{boyd1}, splitting primal-dual proximity algorithm (SPDP) \cite{tang2016-1}, and preconditioned SPDP (Pre-SPDP) \cite{tang2016-1}. All the experiments are accomplished by Matlab and on a standard Lenovo laptop with Intel(R) Core(TM) i7-4712MQ 2.3GHz CPU and 4GB RAM.

First,  we briefly present an optimization model of CT image reconstruction.

\subsection{Computed tomography image reconstruction problem}

In recent years, the compressive sensing theory and related methods have been successfully applied to low-dose computed tomography (CT) image reconstruction problems. As most CT images have piece-wise smooth constants, the total variation (TV) regularization term has gained wide spread use in the CT image reconstruction model. Many efficient first-order methods have been developed to solve TV image reconstruction models.
In 2008, Chen et al. \cite{chengh2008} proposed a prior image constrained compressed sensing (PICCS) model for reconstructing  CT images from few-view and limited angles data. The PICCS model takes the form of
\begin{equation}\label{PICCS}
\begin{aligned}
\min_{x}\ & \alpha \|D_{1}(x-x_p)\|_{1} + (1-\alpha)\|D_2 x\|_1,\\
s.t. & Ax = b,
\end{aligned}
\end{equation}
where $D_1 \in R^{m_1 \times n}$ and $D_2\in R^{m_2 \times n}$ could be any sparsifying transforms, $\alpha \in [0,1]$, $A\in R^{m\times n}$ is the system matrix, $b\in R^{m\times 1}$ is the measured data, and $x_p \in R^{n}$ denotes a prior image. The PICCS model is a generalization of the traditional constrained total variation (TV) image reconstruction model. It reduces to the constrained TV model when $\alpha =0$. The PICCS model has attracted considerable attention because it requires less sampling data than the traditional constrained TV model to obtain a reasonable reconstruction image(see, for example, \cite{chengh2010} and the references therein. Recently, Tang and Zong \cite{tangyc20171} proposed a general PICCS model as follows:
\begin{equation}\label{general-PICCS}
\begin{aligned}
\min_{x}\ & \alpha \varphi_{1}(D_{1}(x-x_p)) + (1-\alpha)\varphi_2 (D_2 x), \\
s.t. & Ax = b, \\
& x\in C,
\end{aligned}
\end{equation}
where $\varphi_1$ and $\varphi_2$ are two proper, lower semicontinuous convex functions, $C$ is a nonempty closed convex set and the remaining assumptions are the same as those of (\ref{PICCS}) In comparison with the original PICCS model (\ref{PICCS}), the general PICCS model (\ref{general-PICCS}) has two advantages. (1) The $\ell_1$ norm in the PICCS model (\ref{PICCS}) is replaced by the general convex functions $\varphi_1$ and $\varphi_2$, which makes the model more flexible. (2) By introducing a closed convex set constraint, it can contain a priori information of the image to be reconstructed, such as nonnegative or bounded.

In this paper, we introduce a regularized general PICCS model as follows:
\begin{equation}\label{regu-GPICCS}
\begin{aligned}
\min_{x}\ & \frac{1}{2}\|Ax-b\|_{2}^{2} + \lambda_1 \varphi_{1}(D_{1}(x-x_p)) + \lambda_2 \varphi_2 (D_2 x), \\
s.t. \ & x\in C,
\end{aligned}
\end{equation}
where $\lambda_1 >0$ and $\lambda_2 >0$ are regularization parameters. The quadratic loss function is reasonable under the assumption that the collected data vector $b$ is corrupted by Gaussian noise. The regularized general PICCS model (\ref{regu-GPICCS}) can be rewritten as
\begin{equation}\label{regu-GPICCS2}
\min_{x}\  \frac{1}{2}\|Ax-b\|_{2}^{2} + \lambda_1 \varphi_{1}(D_{1}(x-x_p)) + \lambda_2 \varphi_2 (D_2 x) + \delta_{C}(x).
\end{equation}

The optimization problem (\ref{regu-GPICCS2}) is a special case of the composite convex optimization problem (\ref{composite-sum}). In fact, let $f(x) = \frac{1}{2}\|Ax-b\|_{2}^{2}$, $g(x) = \delta_{C}(x)$, $h_{1}(x) = \lambda_1 \varphi_{1}(x-D_1 x_p)$, $h_2 (x)=\lambda_2 \varphi_{2}(x)$, $B_1 = D_1$ and $B_2 = D_2$. To apply the proposed iterative algorithms, we have $\nabla f(x) = A^{T}(Ax-b)$ and the proximity operator of $g(x)$ is the orthogonal projection onto the closed convex set $C$, i.e., $prox_{\lambda g}(u)=P_{C}(u)$ for any $\lambda >0$. Furthermore, owing to the Moreau equality, the proximity operators of $h_{1}^{*}$ and $h_{2}^{*}$ can be calculated from $h_1$ and $h_2$, respectively. Therefore, the proposed iterative algorithms can be implemented  easily.

On the other hand, the optimization problem (\ref{regu-GPICCS2}) can also be rewritten in the form of (\ref{composite-sum-withoutdiff}). In fact, let $h_{1}(x) = \frac{1}{2}\|x-b\|_{2}^{2}$, $h_2 (x) = \lambda_1 \varphi_{1}(x-D_1 x_p)$, $h_{3}(x) = \lambda_2 \varphi_2 (x)$, $B_1 = A$, $B_2 = D_1$, $B_3 = D_2$ and $g(x) = \delta_{C}(x)$. Therefore, the splitting primal-dual proximity algorithm and the preconditioned splitting primal-dual proximity algorithm developed in \cite{tang2016-1} can be used to solve the regularized general PICCS model (\ref{regu-GPICCS}). In addition, the ADMM \cite{boyd1} provides a very general framework for solving various convex optimization problems including the regularized general PICCS model (\ref{regu-GPICCS}). We briefly present the unscaled form of ADMM method \cite{boyd1}  below for solving the regularized general PICCS model (\ref{regu-GPICCS}):
\begin{equation}\label{admm-algorithm}
 \left\{
\begin{aligned}
x^{k+1} & = \arg\min_{x\in C}\ \{ \frac{1}{2}\|Ax-b\|_{2}^{2} + \frac{\rho_1}{2} \| D_1 x - y_{1}^{k} + v_{1}^{k} \|^2 + \frac{\rho_2}{2} \| D_2 x - y_{2}^{k} + v_{2}^{k} \|^2   \},\\
y_{1}^{k +1} & = \arg\min_{y_1}\ \{ \lambda_1 \varphi_1 (y_1 - D_1 x_p) + \frac{\rho_1}{2}\| D_1 x^{k+1} - y_1 + v_{1}^{k} \|^2 \}, \\
y_{2}^{k +1} & = \arg\min_{y_2}\ \{ \lambda_2 \varphi_2 (y_2) + \frac{\rho_2}{2}\| D_2 x^{k+1} - y_2 + v_{2}^{k} \|^2 \}, \\
v_{1}^{k+1} & = v_{1}^{k} + D_1 x^{k+1} - y_{1}^{k+1}, \\
v_{2}^{k+1} & = v_{2}^{k} + D_2 x^{k+1} - y_{2}^{k+1},
\end{aligned}
\right.
\end{equation}
where $\rho_1 >0$ and $\rho_2 >0$. Here, the updated sequence $\{x^{k+1}\}$ is obtained by the standard gradient projection algorithm, i.e., $x^{k+1}= P_{C}(x^k - \gamma ( A^{T}(Ax^k -b) + \rho_1 D_{1}^{T}(D_1 x^k - y_{1}^{k} + v_{1}^{k}) + \rho_2 D_{2}^{T}(D_2 x^k - y_{2}^{k} + v_{2}^{k})))$, where $\gamma \in (0, 2/(\|A\|^2 + \rho_1 \|D_1\|^2 + \rho_2 \|D_2\|^2))$.

\subsection{Experimental setup}

The proposed iterative algorithms, namely Algorithm \ref{dual-forward-backward-composite2} and Algorithm \ref{primal-dual-forward-backward-composite2} are compared with ADMM, SPDP and Pre-SPDP. The parameters of each algorithm are set as follows. We set $\gamma = 1.9/(\|A\|^2 + \rho_1 \|D_1\|^2 + \rho_2 \|D_2\|^2)$  and $\rho_1 = \rho_2 =1$ for ADMM. For SPDP and Pre-SPDP, the parameters are set to be the same as those in \cite{tang2016-1}. We select Algorithm \ref{dual-forward-backward-composite2} and Algorithm \ref{primal-dual-forward-backward-composite2} as representatives of the proposed iterative algorithms. Algorithm \ref{dual-forward-backward-composite1} is equivalent to Algorithm \ref{dual-forward-backward-composite2} when the weight vectors are chosen to be the same. The same is true for Algorithm \ref{primal-dual-forward-backward-composite1} and Algorithm \ref{primal-dual-forward-backward-composite2}. For
 Algorithm \ref{dual-forward-backward-composite2} and Algorithm \ref{primal-dual-forward-backward-composite2}, we set $\gamma = 1.9/\|A\|^2$ and $\lambda = 0.9/(\|D_1\|^2 + \|D_2\|^2)$. Moreover, the parameter $\tau$ is set to $1$ for Algorithm \ref{primal-dual-forward-backward-composite2}. Total variation regularization has been proved to be very useful for  CT image reconstruction from limited angles and sparse view projections. Hence, for the regularized general PICCS model (\ref{regu-GPICCS}), we choose the regularization terms $\varphi_1(D_1 x - D_1 x_p)$ and $\varphi_2(D_2 x)$ as the total variation. Then, the matrices $D_1$ and $D_2$ are the first-order difference matrices, which are defined by (\ref{difference}). The functions $\varphi_1$ and $\varphi_2$ are the usual $\ell_1$-norm.

We use the same data-set as that used in \cite{tangyc20171} in the numerical experiments. The standard $256\times 256$ Shepp-Logan phantom image is used as the reconstructed image (Figure \ref{shepp-logan} (a)). The priori image $x_p$ in the regularized general PICCS model (\ref{regu-GPICCS}) is obtained by adding a random Gaussian noise with  mean $0$ and  variance $0.01$ to the original Shepp-Logan phantom (Figure \ref{shepp-logan} (b)). We set the nonempty closed convex set $C$ to be nonnegative set, i.e., $C = \{ x | x\geq 0\}$.
   \begin{figure}
      \setlength{\abovecaptionskip}{-20pt}
   \begin{center}
   \begin{tabular}{c}
   \scalebox{0.65}{\includegraphics{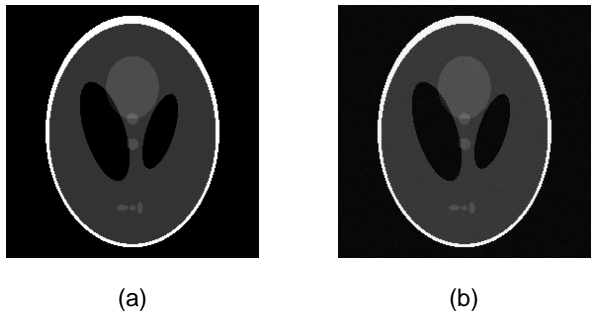}}
   \end{tabular}
   \end{center}
   \caption[]{(a)\ Standard $256\times 256$ Shepp-Logan phantom (b)\ Simulated priori image $x_p$. The $2$-norm error between the standard Shepp-Logan phantom and the priori image $x_p$ is $2.5529$. }
   { \label{shepp-logan}}
   \end{figure}

The phantom is scanned by a fan beam with $20$ views distributed from $0$ to $360$ and $100$ rays in each view. The size of the system matrix $A$ is $2000\times 65536$. The simulated projection data is generated by AIRtools \cite{hansen1}. Random Gaussian noise with mean $0$ and variance $e$ is added to the collected data vector $b$.

The image reconstruction quality is evaluated on the basis of the signal-to-noise ratio (SNR) and normalized mean square distance (NMSD), where
$$
SNR = 10log\frac{\|x-\overline{x}\|^2}{\|x_r - x\|^2},
$$
and
$$
NMSD = \frac{\|x-x_r\|}{\|x-\overline{x}\|},
$$
where $\overline{x}$ is the average value of the original image $x$ and $x_r$ is the reconstructed image.

We set the regularization parameters $\lambda_1 = 0.4$ and $\lambda_2 = 0.5$ throughout the test. The stopping criterion for all the test algorithms is defined as
$$
\frac{\|x^{k+1}-x^k\|}{\|x^k\|} < \epsilon,
$$
where $\epsilon >0$ is a given tolerance value.

\subsection{Numerical results and discussions}
In this subsection, we report the numerical results obtained by the considered iterative algorithms. The numerical results are summarized in Table \ref{results1}.
"Iter" represents the iteration number used when the iterative algorithm stops under the given stopping criterion $\epsilon$. The symbol '-' implies that the maximum number of iterations exceeds $40000$ .

\begin{table}[htbp]
\scriptsize
\centering
\caption{Numerical results obtained by the tested iterative algorithms for CT image reconstruction.}
\begin{tabular}{c|c|ccccccc}
\hline
Noise & \multirow{2}[1]{*}{Methods} &  \multicolumn{3}{c}{$ \epsilon = 10^{-6}$} &  & \multicolumn{3}{c}{$ \epsilon = 10^{-8}$} \\ \cline{3-5} \cline{7-9}
Level $e$ & &  $SNR(dB)$ & $NMSD$ & $Iter$ & & $SNR(dB)$ & $NMSD$ & $Iter$  \\
\hline
\hline
\multirow{5}[1]{*}{$0.01$}  & Algorithm \ref{dual-forward-backward-composite2} &  $38.7929$ & $0.0115$ & $2816$ & &  $38.8477$ & $0.0114$ & $5203$ \\
 & Algorithm \ref{primal-dual-forward-backward-composite2} &  $38.7043$ & $0.0116$ & $5230$ & & $38.8473$ & $0.0114$ & $9050$  \\
 & ADMM \cite{boyd1}&  $38.8478$ & $0.0114$ & $5674$ & &  $38.8483$ & $0.0114$ & $33029$  \\
 & SPDP \cite{tang2016-1} &  $38.7560$ & $0.0115$ & $-$ & &  $38.7560$ & $0.0115$ & $-$ \\
  & Pre-SPDP \cite{tang2016-1} &  $38.8214$ & $0.0115$ & $7937$ & &  $38.8469$ & $0.0114$ & $-$ \\
\hline
\multirow{5}[1]{*}{$0.05$}  & Algorithm \ref{dual-forward-backward-composite2} &  $38.2471$ & $0.0122$ & $2874$ & &  $38.3281$ & $0.0121$ & $5165$ \\
 & Algorithm \ref{primal-dual-forward-backward-composite2} &  $38.1596$ & $0.0124$ & $5339$ & & $38.3270$ & $0.0121$ & $9586$  \\
 & ADMM \cite{boyd1}&  $38.3539$ & $0.0121$ & $5682$ & &  $38.3275$ & $0.0121$ & $26180$  \\
 & SPDP \cite{tang2016-1}&  $38.3411$ & $0.0121$ & $-$ & &  $38.3411$ & $0.0121$ & $-$ \\
  & Pre-SPDP \cite{tang2016-1}&  $38.3476$ & $0.0121$ & $8833$ & &  $38.3286$ & $0.0121$ & $-$ \\
\hline
\multirow{5}[1]{*}{$0.1$}  & Algorithm \ref{dual-forward-backward-composite2} &  $36.5793$ & $0.0148$ & $2957$ & &  $36.6511$ & $0.0147$ & $5898$ \\
 & Algorithm \ref{primal-dual-forward-backward-composite2} &  $36.4905$ & $0.0150$ & $5376$ & & $36.6500$ & $0.0147$ & $10825$  \\
 & ADMM \cite{boyd1}&  $36.6695$ & $0.0147$ & $5896$ & &  $36.6526$ & $0.0147$ & $25855$  \\
 & SPDP \cite{tang2016-1}&  $36.6684$ & $0.0147$ & $-$ & &  $36.6684$ & $0.0147$ & $-$ \\
  & Pre-SPDP \cite{tang2016-1}&  $36.6677$ & $0.0147$ & $8546$ & &  $36.6522$ & $0.0147$ & $-$ \\
\hline
\end{tabular}\label{results1}
\end{table}

We can see from Table \ref{results1} that Algorithm \ref{dual-forward-backward-composite2} requires fewer iterations than the other iterative algorithms to achieve the same accuracy. The SNR value achieved by Algorithm \ref{primal-dual-forward-backward-composite2} is nearly the same as that achieved by Algorithm \ref{dual-forward-backward-composite2}. However, Algorithm \ref{primal-dual-forward-backward-composite2} requires more number of iterations than the Algorithm \ref{dual-forward-backward-composite2}. The SNR and function value versus number of iterations of all the tested algorithms are shown in Figure \ref{snrandfvalue}. We can observe that the objective function value of all the tested algorithms reaches nearly the same minimum value. For better visual inspection, Figure \ref{snrandfvaluezoomin} shows magnified views of Figure \ref{snrandfvalue}.  We observe that the SNR values of Algorithm \ref{dual-forward-backward-composite2} and Algorithm \ref{primal-dual-forward-backward-composite2} always increase with the number of iterations. However, the SNR values of the ADMM, SPDP and Pre-SPDP increase first, and then decrease after reaching the maximum value. For example, in the experiment with $e=0.01$ and $\epsilon = 10^{-8}$, the maximum SNR value and the corresponding number of iterations are $38.9533$ (dB) and $2398$ for ADMM.  $38.7777$ (dB) and  $38706$ for SPDP, and $38.8469$ (dB) and  $39381$ for Pre-SPDP.
Therefore, we can conclude that Algorithm \ref{dual-forward-backward-composite2} and Algorithm \ref{primal-dual-forward-backward-composite2} are more robust than the other algorithms in CT image reconstruction. Figure \ref{reconstructedimages} shows the reconstructed images obtained by the tested algorithms.

   \begin{figure}
      \setlength{\abovecaptionskip}{-20pt}
   \begin{center}
   \begin{tabular}{c}
   \scalebox{0.65}{\includegraphics{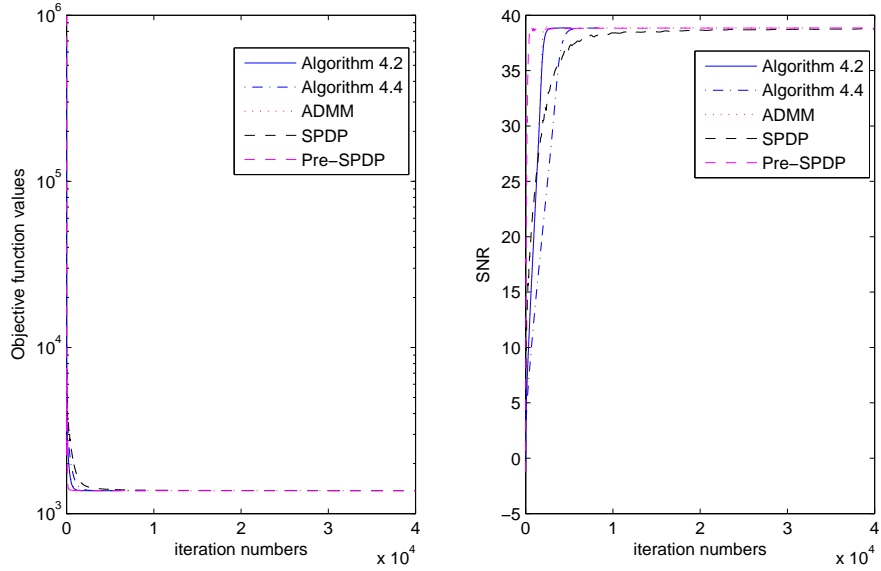}}
   \end{tabular}
   \end{center}
   \caption[]{Comparison of the tested algorithms when $e=0.01$ and $\epsilon = 10^{-8}$. (a) Objective function values versus number of iterations and (b) SNR versus number of iterations. }
   { \label{snrandfvalue}}
   \end{figure}

   \begin{figure}
      \setlength{\abovecaptionskip}{-20pt}
   \begin{center}
   \begin{tabular}{c}
   \scalebox{0.65}{\includegraphics{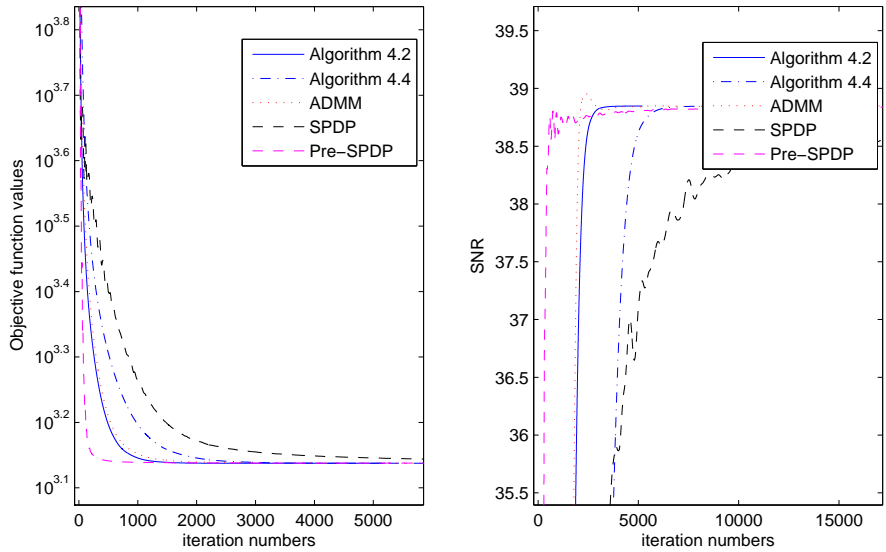}}
   \end{tabular}
   \end{center}
   \caption[]{Magnified views of Figure \ref{snrandfvalue}.  }
   { \label{snrandfvaluezoomin}}
   \end{figure}

   \begin{figure}
      \setlength{\abovecaptionskip}{-20pt}
   \begin{center}
   \begin{tabular}{c}
   \scalebox{0.65}{\includegraphics{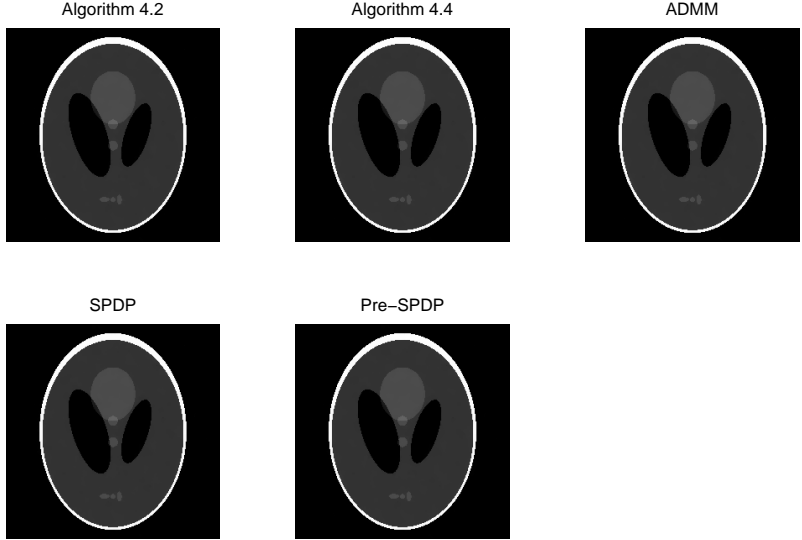}}
   \end{tabular}
   \end{center}
   \caption[]{Reconstructed results obtained by the tested algorithms with $e=0.1$ and $\epsilon = 10^{-8}$. }
   { \label{reconstructedimages}}
   \end{figure}

\section{Conclusions}
\label{conclusions}
In this paper, we proposed several first-order splitting algorithms for solving the composite convex optimization problem (\ref{composite-sum}). Many problems arising in signal recovery and image processing are  special cases of this problem. To solve the considered convex optimization problem (\ref{composite-sum}), we defined a Cartesian product space and transformed the original optimization problem into that of the sum of three convex functions, including a smooth function with a Lipschitz continuous gradient, a nonsmooth proximable function, and a composition of a proximable function and a linear operator. In the newly defined inner product space, we define an inner product and a norm (both different); one was weighted and the other was unweighted. We generalized the dual forward-backward splitting algorithm and the primal-dual forward-backward splitting algorithm to solve the composite convex optimization problem (\ref{composite-sum}) and obtained the corresponding iterative algorithms under the defined inner product space. Furthermore, under certain conditions, we proved the convergence of the proposed iterative algorithms. We observed that if the weight vectors of the weighted inner product space are equal, then the corresponding iterative algorithm is equivalent to the iterative algorithm obtained in the unweighted inner product space. Furthermore, we applied the proposed iterative algorithms to the regularized general PICCS model (\ref{regu-GPICCS}) for CT image reconstruction from undersampled projection measurements. The numerical results indicated that the proposed iterative algorithms facilitate accurate reconstruction of the CT images. Compared with state-of-the-art ADMM, SPDP, and Pre-SPDP, the proposed iterative algorithms were shown to be more robust than those of others in terms of signal-to-noise ratio (SNR) versus number of iterations.

\section*{Competing interests}
The authors declare that they have no competing interests.

\section*{ACKNOWLEDGMENTS}
This work was supported by the National Natural Science Foundations
of China (11401293, 11661056), the Natural Science
Foundations of Jiangxi Province (20151BAB211010), the China Postdoctoral Science Foundation (2015M571989)
and the Jiangxi Province Postdoctoral Science Foundation (2015KY51).

 %

\end{document}